\theoremstyle{plain}
\newtheorem{thm}{Theorem}[section]
\theoremstyle{plain}
\newtheorem{lem}[thm]{Lemma}
\theoremstyle{definition}
\newtheorem{defi}{Definition}[section]
\newtheorem*{rem}{Remark}
\newcommand{\R}{\mathbb{R}}
\renewcommand{\S}{\mathbb{S}}
\newcommand{\rn}{\mathbb{R}^{N}}
\newcommand{\rnn}{\mathbb{R}^{2}}
\newcommand{\hn}{\mathbb{H}^{N}}
\newcommand{\hnn}{\mathbb{H}^{2}}
\newcommand{\authorfootnotes}{\renewcommand\thefootnote{\@fnsymbol\c@footnote}}%
\newcommand{\hes}{\rm{Hess}}
\newcommand{\Exp}{ {\rm exp}}
\def\e{{\text{e}}}
\numberwithin{equation}{section} \allowdisplaybreaks
\title{Adams Inequality on pinched Hadamard Manifolds}
\author{Jerome Bertrand$^\dagger$ and   kunnath Sandeep$^{\dagger\dagger}$}
\thanks{$^{\dagger}$ Institut de Math\'ematiques de Toulouse, UMR CNRS 5219 Universit\'e Toulouse III, Toulouse Cedex 9, France. E-mail : bertrand@math.univ-toulouse.fr\\
$^{\dagger\dagger}$ TIFR  Centre for Applicable Mathematics, Post Bag No. 6503,Sharadanagar,Yelahanka New Town, Bangalore 560065. Email: sandeep@math.tifrbng.res.in}
\begin{document}
\footnotetext{This work is supported by the IFCAM project "Functional Inequalities PDE's and Geometry."}

\begin{abstract}
In this article we prove the Adams type inequality for $W^{k,p}(M)$ functions, where $(M,g)$ is a $n$-dimensional Hadamard manifold with sectional curvature bounded from below and above by a negative constant and $k$ is an integer satisfying $kp=n$. 
\end{abstract}

\maketitle
MSC2010 Classification: {\em 46E35, 58E35.}\\
Keywords: {\em Adams Inequality, Hadamard manifolds.}
\section{Introduction}In this article we focus on the Adams inequality on Hadamard manifolds. Recall a Hadamard manifold is a complete simply connected manifold of nonpositive sectional curvature and Adams inequalities are the optimal Sobolev embedding of the Sobolev space $W^{k,p}$ when $kp=n$, where $n$ is the dimension of the space.\\\\
There are many works on Sobolev embeddings on Riemannian manifolds and we know in particular that the Sobolev embedding holds when the manifold is compact. To be precise, let $(M,g)$ be a compact Riemannian manifold then the Sobolev embedding states that the Sobolev space $W^{k,p}(M)$ is continuously embedded into $L^q(M)$ where $q= \frac{np}{n-kp}$ provided $1\le p < \frac{n}{k}.$
The precise inequalities with precise constants describing these embeddings are of importance in both partial differential equations and geometric analysis, the study of these inequalities has been a hot topic of research for the past many decades . However when $M$ is a complete noncompact manifold then the Sobolev embedding is a nontrivial issue. In fact there exists a complete noncompact Riemannian manifold $M$ for which the Sobolev embedding $W^{k,p}(M)\hookrightarrow L^q(M)$ does not hold for any $p$ satisfying $kp<n$, where $q=\frac{np}{n-kp}$.We refer to \cite{Hebey} for a detailed discussion on the topic. \\\\When $M$ is compact and $p=\frac{n}{k}$, one can easily see that $W^{k,p}(M)$ is continuously embedded into $L^q(M)$ for all $q < \infty$ but not for $q=\infty$ and hence none of the above embeddings $W^{k,p}(M) \hookrightarrow L^q(M)$, for $q<\infty$, are optimal.  When $M$ coincides with a bounded domain $\Omega$ in $\R^n$ with smooth boundary and $k=1$, an embedding  of the Sobolev space $W^{1,p}_0(\Omega)$ into an Orlicz space establishing the exponential integrability of these functions was obtained by  Poho\v{z}aev \cite{Pohozaev} and Trudinger \cite{Trudinger}. In 1971, J.Moser \cite{Moser} while trying to study the question of prescribing the Gaussian curvature on the sphere understood the need for establishing a sharp form of the embedding obtained by Poho\v{z}aev and Trudinger. He showed that there exists a positive constant $C_0$ depending only on $n$ such that
\begin{align} \label{Moser}
\sup_{u \in C^{\infty}_c(\Omega), \int_{\Omega} |\nabla u|^n \leq 1} 
\int_{\Omega} e^{\alpha |u|^{\frac{n}{n - 1}}} \ dx \leq C_0 |\Omega|
\end{align}
holds for all $\alpha \le \alpha_n = n\left[\omega_{n-1}\right]^\frac{1}{n-1} $, where $\Omega$  is a bounded domain in $\R^n,$, $|\Omega|$ denotes the volume 
of $\Omega$, and $\omega_{n-1}$ denotes the $(n-1)$-dimensional volume of the sphere $\S^{n-1}$. Moreover when $\alpha > \alpha_n$, the above supremum is infinite. Moser, in the same paper, established the appropriate version of this sharp inequality on the sphere $\S^{2}$ and later Cherrier \cite{Che} proved it for a non-optimal exponent on any compact Riemannian manifold. These optimal inequalities of the Sobolev space $W^{1,n}(M)$, where $n$ is the dimension of $M$, are called the Moser-Trudinger inequalities.\\\\
Even though one expects a similar type inequality to hold for higher order Sobolev spaces, it is not at all obvious how to modify the proofs of the case $k=1$ to $k>1$ due to the failure of Polya-Szego type inequalities for higher order gradients $\nabla^k$. In a significant work, D.R. Adams \cite{A} established the sharp embedding in the case of higher order Sobolev spaces $W^{k,p}_0(\Omega)$ when $kp=n$. He found the sharp constant $\beta_0$ for the higher order Trudinger-Moser type inequality. More precisely, he proved that if $k$ is a positive integer less than $n,$ then there exists a constant $c_0 = c_0(k,n)$ such that 
\begin{align} \label{Adamsintegral}
 \sup_{u \in C^{k}_c(\Omega), \int_{\Omega} |\nabla^k u|^p \leq 1}\int_{\Omega} e^{\beta |u(x)|^{p^{\prime}}} \ dx \leq c_0 |\Omega|,
\end{align}
for all $\beta \leq \beta_0(k,n)$ and for all bounded domains $\Omega$ in $\R^n$, where $p = \frac{n}{k},\;p^{\prime} = \frac{p}{p-1} $,
\begin{align} \label{beta0}
 \beta_0(k,n) = 
 \begin{cases}
  \frac{n}{\omega_n} \left[\frac{\pi^{\frac{n}{2}}2^k \Gamma\left(\frac{k + 1}{2}\right)}
  {\Gamma \left(\frac{n- k + 1}{2}\right)}\right]^{p^{\prime}}, \ \ \ \mbox{if $k$ is odd}, \\ \ \
  \frac{n}{\omega_n} \left[\frac{\pi^{\frac{n}{2}}2^k \Gamma\left(\frac{k}{2}\right)}
  {\Gamma \left(\frac{n - k }{2}\right)}\right]^{p^{\prime} }, \ \ \ \mbox{if $k$ is even}, \\
  \end{cases}
\end{align}
and $ \nabla^k $ is defined by
 \begin{align}
  \nabla^k :=
  \begin{cases}
   \Delta^{\frac{k}{2}} , \ \ \ \ \ \ \ \mbox{if} \ k \ \mbox{is even}, \\
   \nabla \Delta^{\frac{k-1}{2}} , \ \ \mbox{if} \ k \ \mbox{is odd}.
  \end{cases}
\end{align}
Furthermore, if $\beta > \beta_0,$ then the supremum in \eqref{Adamsintegral} is infinite.\\\\Subsequently,  Fontana in \cite{Fonta} obtained the following  sharp version of \eqref{Adamsintegral} on compact Riemannian manifolds:\\
Let $(M,g)$ be an $n$-dimensional compact Riemannian manifold without boundary, and $k$ be a positive integer less than $n,$ then there exists a constant $c_0 = c_0(k,M)$ such that
\begin{align} \label{Fontanaintegral}
 \sup_{u \in C^{k}(M),\int_Mu =0, \int_{\Omega} |\nabla^k u|^p \leq 1}\int_{M} e^{\beta |u(x)|^{p^{\prime}}} \ dx \leq c_0
\end{align}
if $\beta \leq \beta_0(k,n),$ where $p,p^{\prime},\nabla_g^k $ are as above, and where $\nabla_g$ and $\Delta_{g}$ are the gradient and Laplace Beltrami operators with respect to the metric $g.$ Furthermore, if $\beta > \beta_0,$ then the supremum in \eqref{Fontanaintegral} is infinite.
These type of sharp inequalities satisfied by the $W^{k,p}(M)$ functions when $kp=n$ are called the Adams inequalities.\\\\
In this article, our focus will be on Adams inequalities on Hadamard manifolds.
First observe that Hadamard manifolds have infinite volume and hence $ \int_{M} e^{\beta |u(x)|^{p^{\prime}}} dx $ is infinite even for the trivial function $u=0$. To tackle these issues, we modify the exponential function and look for inequalities of the form
\begin{equation}\label{aim}
 \sup_{u \in C^{k}_c(M), \int_{M} |\nabla_g^k u|^p \leq 1}\int_{M} E_{s}({\beta |u(x)|^{p^{\prime}}}) \ d\mu_g(x)< \infty
\end{equation} 
for $\beta \leq \beta_0(k,n)$, where $\beta_0(k,n)$ is defined as in \eqref{beta0} and $E_{s}(x) = e^x-\sum\limits_{i=0}^{s-1}\frac{x^i}{i!}$ for some positive $s\in \mathbb{N}.$\\\\
First, observe that if \eqref{aim} holds for some positive $s\in \mathbb{N}$, then as a consequence we will have the inequality
\begin{equation}\label{consequnce}
\left[\int_{M}  |u(x)|^{sp^{\prime}} \ d\mu_g(x)\right]^{\frac{p}{sp^\prime}} \le C \int_{M} |\nabla_g^k u|^p \ d\mu_g(x) ,\;\; \forall u \in C^{k}_c(M).
\end{equation}
When $M$ is the Euclidean space $\R^n$, using standard scaling arguments we can see that such inequalities and hence \eqref{aim} are impossible as $kp=n$. However, in this case, one can prove embeddings if one replaces the constraint
$\int_{M} |\nabla^k u|^p \leq 1$ by $\int_{M} |\nabla^k u|^p + \lambda\int_{M} |u|^p  \leq 1$ for some positive constant $\lambda,$
see  Cao \cite{Cao}, Panda \cite{Panda}, J.M. do \'{O} \cite{DoO}, Ruf \cite{Ruf}, Li-Ruf \cite{LiRuf}, and the references therein.
\\\\
When the sectional curvature is bounded from above by a negative constant we do have inequalities like \eqref{consequnce}. For example we have the Poincare inequality which follows from Theorem \ref{mckean}. Therefore, one type of spaces where we expect Adams inequality of the form \eqref{aim} is this set of strictly negatively curved spaces. In the case of constant negative curvature, namely the hyperbolic space, Trudinger-Moser and Adams inequalities have been investigated in detail.
For $k = 1, n = 2,$ Mancini-Sandeep \cite{MS} proved the Trudinger-Moser inequality
in the hyperbolic space or, in other words, $W^{1,2}(\hnn)$ is embedded into the Zygmund space $Z_{\phi}$ determined by 
the function $ \phi = (e^{4\pi u^2} - 1).$ Another proof of this inequality was given by  Adimurthi-Tinterev \cite{AdiT}. In fact in \cite{MS}, they obtained the following general theorem:\\
 Let $\mathbb{D}$ be the unit open disc in $\rnn,$ endowed with a conformal metric
$h = \rho g_e,$  where $g_e$ denotes the Euclidean metric and $\rho \in C^2(\mathbb{D}), \rho > 0,$ then
\begin{align} \label{TMH2}
 \sup_{u \in C^{\infty}_c(\mathbb{D}), \int_{\mathbb{D}} |\nabla_h u|^2 \leq 1} \int_{\mathbb{D}} 
                     \left(e^{4\pi u^2} - 1\right) \ d\mu_h < \infty
\end{align}
holds true if and only if $h \leq c\, g_{\hnn}$ for some positive constant $c.$ Here, $\nabla_h, d\mu_h$ denotes respectively the gradient and volume element for the metric $h$, and 
$g_{\hnn} =  \left(\frac{2}{1 - |x|^2}\right)^2 \left(dx^2_1+ dx^2_2\right)$ is the Poincare metric in the disc.\\
Extensions of this inequality to $n>2$ were obtained in Lu-Tang \cite{LuT} and Battaglia-Mancini \cite{BM}. See also \cite{MST} for another proof and related issues.\\\\
Various forms of Adams inequality in the hyperbolic space were proved by  Karmakar and Sandeep \cite{KaSa} and  Fontana and Morpurgo \cite{FontaMo}. In \cite{FontaMo}, it was shown that \eqref{aim} holds when $M$ is the hyperbolic space and $k=[p-1]$, where $[x]$ denotes the smallest integer greater than or equal to $x$. In \cite{KaSa} another approach was taken from the point of view of prescribing the $Q$-curvature; the authors proved the following inequality with $p=2$:
 \begin{align}
  \sup_{u \in C^{\infty}_c(M) ,\  \int_{M}(P_{\frac{n}{2}}u)u \  d\mu_g \ \leq 1} \int_{M} \left(e^{\beta u^2} - 1\right) \ d\mu_g < +\infty
 \end{align}
iff $\beta \leq \beta_0(\frac{n}{2},n)$, where $\beta_0$ is as before and $M$ is the $n$-dimensional hyperbolic space and $P_{\frac{n}{2}}$ is the critical GJMS operator in the hyperbolic space. Related inequalities with Hardy type potentials were obtained in \cite{LLY}.\\\\
Moser-Trudinger inequality has been proved for general Hadamard manifolds in \cite{YSY}. Namely, the authors showed that when $M$ is a Hadamard manifold then for any $\lambda >0$ the inequality 
\begin{equation}\label{MT-Had}
 \sup_{u \in C^{1}_c(M), \int_{M} (|\nabla u|^n+\lambda |u|^n)\ d\mu_g \leq 1}\int_{M} E_{n-1}({\beta |u(x)|^{\frac{n}{n-1}}}) \ d\mu_g(x)< \infty
\end{equation} 
holds with the optimal choice of $\beta $ as $n\left[\omega_{n-1}\right]^\frac{1}{n-1} $.\\\\
In this article we investigate the validity of Adams inequality of the form \eqref{aim} in general pinched Hadamard manifolds. The main difficulty one faces in this task is to handle the case of infinite volume. Also, unlike in the constant curvature spaces, estimates on balls of fixed radius will depend on the center of the ball. To handle these situations we make some assumptions on the curvature. Following is the main result in this article.

\begin{thm}\label{theorem1}
Let $(M,g)$ be an $n$-dimensional pinched Hadamard manifold satisfying  $ K_g\le -a^2$ and $Ric_g \ge -(n-1)b^2 $ for some $a,b>0$\footnote{Consequently, $K_g$ is bounded from below as well.}. Let $k$ be an integer satisfying $ 1\le k   < n$ and $p = \frac{n}{k} $. Then,
\begin{align} 
 \sup_{u \in C^{k}_c(M), \int_{M}|\nabla^k u|^p \leq 1}\int_{M} E_{[p-1]}({\beta |u(x)|^{p^{\prime}}}) \ d\mu_g(x)< \infty
\end{align}
iff $\beta \leq \beta_0(k,n)$, where $\beta_0(k,n)$ is as defined in \eqref{beta0}. 
\end{thm}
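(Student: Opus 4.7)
My plan is to prove the two directions separately. \textbf{Necessity} ($\beta > \beta_0 \Rightarrow$ supremum infinite) is local: pick any $x_0 \in M$ and work in a small geodesic ball $B_r(x_0)$, where in normal coordinates $g$ is arbitrarily $C^k$-close to the Euclidean metric for $r$ small. Plug in the classical concentrating Moser--Adams sequence from \cite{A}, supported in such a ball and rescaled so that $\int_M |\nabla_g^k u_j|^p \le 1$; since the Euclidean sharp constant is $\beta_0(k,n)$, the integrals $\int_M E_{[p-1]}(\beta|u_j|^{p'})\, d\mu_g$ diverge for any $\beta > \beta_0$. The curvature bounds play no role here.

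For \textbf{sufficiency}, my plan follows Adams' Green-function strategy. Since $K_g \le -a^2$, McKean's theorem (Theorem \ref{mckean}) yields a positive spectral gap, so $(-\Delta_g)^{k/2}$ (even $k$) respectively $\nabla_g \Delta_g^{(k-1)/2}$ (odd $k$) is invertible on $C_c^\infty(M)$, producing a representation
\[
    u(x) = \int_M G(x,y)\cdot\nabla_g^k u(y)\, d\mu_g(y), \qquad u \in C_c^k(M),
\]
with a kernel $G$ scalar for even $k$ and vector-valued for odd $k$. The crucial step is sharp two-sided control of $G$: near the diagonal, $G(x,y) = c_{k,n}\, d_g(x,y)^{k-n} + O\!\left(d_g(x,y)^{k-n+1}\right)$ with $c_{k,n}$ exactly the Euclidean Riesz-type constant corresponding to $\beta_0(k,n)$, while for $d_g(x,y) \ge R_0$ the kernel $G$ decays exponentially at a rate controlled by $a$ and $b$.

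With the kernel in hand, split $G = G_{\mathrm{loc}} + G_{\mathrm{glob}}$ at scale $R_0$. The $G_{\mathrm{loc}}$ part behaves, up to a controlled error, like the Euclidean Riesz kernel, so Adams' original technique --- O'Neil's lemma applied to the symmetric decreasing rearrangement of $G_{\mathrm{loc}}(x,\cdot)$ combined with the sharp logarithmic-integral inequality --- gives the exponential bound with the precise constant $\beta_0(k,n)$. The $G_{\mathrm{glob}}$ part, by exponential decay together with the at-most-exponential volume growth coming from the Ricci lower bound, contributes only lower-order $L^q$ mass, which enters the exponential through subdominant terms. Then cover $M$ by balls $B_{R_0}(x_i)$ of uniformly bounded overlap --- feasible because the Ricci lower bound supplies uniform volume doubling at scale $R_0$ --- and sum the pointwise exponential bounds. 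The subtraction of the first $[p-1]$ Taylor terms built into $E_{[p-1]}$ is precisely what kills the constant-per-ball contributions that would otherwise make the global integral diverge, and leaves a convergent series.

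The \textbf{main obstacle} is twofold. First, securing the sharp local expansion $G(x,y) \sim c_{k,n} d_g(x,y)^{k-n}$ with the correct Euclidean constant (and not a smaller one) requires a parametrix construction whose error is $o\!\left(d_g^{k-n}\right)$ uniformly in the base point; this rests on the full two-sided pinching $-b^2 \le K_g \le -a^2$, which grants uniform Laplace and Hessian comparison from both sides together with uniform injectivity radius lower bounds. Second, arranging that the exponential decay of $G_{\mathrm{glob}}$ dominates the exponential volume growth well enough for the summation over the cover to converge, balancing the spectral gap from $K_g \le -a^2$ against the volume growth from $Ric_g \ge -(n-1)b^2$, is the other delicate ingredient and the place where the precise interplay between the two pinching bounds is used.
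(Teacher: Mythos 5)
Your overall scheme --- Green's function representation of $u$ in terms of $\nabla_g^k u$, sharp near--diagonal asymptotics giving the Euclidean Riesz constant, exponential far--field decay from the spectral gap, and rearrangement/O'Neil machinery --- matches the paper's approach in spirit. But the step where you assemble the global inequality has a genuine gap, and several details are organized differently in the paper in a way that matters.

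The main gap is the ``cover $M$ by balls of uniformly bounded overlap and sum the pointwise exponential bounds.'' You cannot localize $u$ to a ball of fixed scale $R_0$ without cutoff functions, and cutoffs destroy the constraint $\int_M|\nabla_g^k u|^p\le 1$ (each overlap region picks up derivative contributions and the total is not controlled), so the sharp constant $\beta_0$ is lost before you even sum. Your claim that ``the subtraction of the first $[p-1]$ Taylor terms is precisely what kills the constant-per-ball contributions'' is also not the correct mechanism. The paper avoids covering altogether: it invokes an abstract theorem of Fontana--Morpurgo (Theorem \ref{thmFM} here) which, given uniform rearrangement bounds $K^\ast(t)\le[\beta_0 t]^{-1/q'}(1+Ct^\beta)$ near $t=0$ and an $L^{q'}$ tail bound for $t>1$, yields $\int_N\exp(\beta_0|Tf|^{q'})\,d\mu_g\le \tilde C(1+\mu_g(N))$ for every finite-measure $N$. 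One then takes $N=\{|u|>1\}$, whose measure is $\le\|u\|_p^p$, controlled uniformly via the Poincar\'e/Strichartz inequality (Theorem \ref{mc-st-k}); on $\{|u|\le1\}$ one simply uses $E_{[p-1]}(\beta|u|^{p'})\le C|u|^p$ because $[p-1]p'\ge p$. That is the real role of the truncated exponential, and no partition of $M$ is needed.

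Two further differences. First, rather than building a parametrix for the $k$-th order operator directly (your ``vector-valued $G$'' for odd $k$), the paper constructs only the order-2 Green's function $G$ via Li--Tam exhaustion and then forms the iterated kernels $K^m$ by repeated convolution of $G$ and $|\nabla_g G|$ (equation \eqref{kernel-defi}). The exact constant $\alpha_{n,m}$, hence $\beta_0(k,n)$, then emerges from the Euclidean identity $\int_{\R^n}|x|^{\alpha-n}|x-y|^{\beta-n}\,dx=\gamma(\alpha)\gamma(\beta)\gamma(\alpha+\beta)^{-1}|y|^{\alpha+\beta-n}$, and the error term one needs is only of order $d_g(x,y)^{1/2}$, not the stronger $O(d_g^{k-n+1})$ you claim. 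Second, on a Hadamard manifold the exponential map is a global diffeomorphism, so your worry about ``uniform injectivity radius lower bounds'' is vacuous; what the two-sided pinching actually buys is uniform Bishop volume comparison and the Yau/Li--Wang gradient estimate from $Ric\ge-(n-1)b^2$, and the McKean spectral gap plus Hessian comparison from $K_g\le-a^2$ --- this part of your analysis correctly identifies where each hypothesis enters.
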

As a consequence of the above theorem we can argue as in \cite{MS, KaSa} to get the exact asymptotic behaviour of the best constant of the Sobolev embedding $ W_0^{k,p}(M)\hookrightarrow L^q(M)$ as $q\rightarrow \infty.$\\
Let $(M,g) \ ,k,p $ as in Theorem \ref{theorem1}, then for any  $q \in (p,\infty) $, the following inequality holds
\begin{equation}\label{Lqinequality}
S_{q}\left[\int\limits_M |u|^q\ d\mu_g\right]^{\frac{p}{q}} \ \le \ \int\limits_M|\nabla_g^ku|^p  \ d\mu_g \ , \ \forall \ u \in C_c^\infty(M)
\end{equation} 
where $ S_q $ denotes the optimal constant in the above inequality which may depend on $n,k,q$. The above inequality easily follows from Theorem \ref{theorem1} when $q$ is of the form $ q= s\frac{p}{p-1}$ where $s$ is an integer satisfying $s\geq [p-1]$. For other values of $q$, it follows by interpolation. Then, it is obvious that 
$ \lim_{q \rightarrow \infty } S_q =0$ as otherwise it will imply embedding of $W_0^{m,p}(M)$ into $L^\infty(M)$, which is not true. We show that:
\begin{thm}\label{theorem2} Let $(M,g)$ be as in Theorem \ref{theorem1}, then
$$\lim_{q\rightarrow \infty}\left[ q^{p-1}S_q \right] \ = \ \left[\frac{p}{p-1} e \beta_0(k,n)\right]^{p-1}.$$
\end{thm}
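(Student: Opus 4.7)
The plan is to establish matching bounds $\liminf_{q\to\infty} q^{p-1} S_q \geq L$ and $\limsup_{q\to\infty} q^{p-1} S_q \leq L$, where $L := \bigl[\frac{p}{p-1} e\,\beta_0(k,n)\bigr]^{p-1}$. The lower bound is a soft consequence of Theorem~\ref{theorem1} together with Stirling's formula; the upper bound requires constructing concentrating Moser--Adams type test functions supported in a small geodesic ball and exploiting that $M$ is locally close to Euclidean.

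For the lower bound, fix $u \in C_c^k(M)$ with $\|\nabla_g^k u\|_p \leq 1$. By Theorem~\ref{theorem1} there is a constant $C_0>0$ with
\[
\sum_{j \ge [p-1]} \frac{\beta_0^{\,j}}{j!}\int_M |u|^{jp'}\, d\mu_g
\;=\; \int_M E_{[p-1]}\bigl(\beta_0 |u|^{p'}\bigr)\,d\mu_g
\;\le\; C_0.
\]
Retaining the $j$-th term alone and restoring homogeneity in $u$ yields
\[
\int_M |u|^{jp'}\,d\mu_g \;\le\; C_0\,\frac{j!}{\beta_0^{\,j}}\,\|\nabla_g^k u\|_p^{jp'},
\qquad \forall\, j \ge [p-1].
\]
With $q=jp'$, so $p/q = (p-1)/j$, this rearranges to $S_q \ge C_0^{-(p-1)/j}\beta_0^{p-1}(j!)^{-(p-1)/j}$. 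Stirling's formula $(j!)^{1/j} = (j/e)(1+o(1))$, combined with $j = q(p-1)/p$, yields $S_q \ge L\, q^{-(p-1)}(1+o(1))$ along the sequence $q=jp'$; for general $q\to\infty$, H\"older interpolation between $q_1 = \lfloor q/p'\rfloor p'$ and $q_2 = q_1+p'$ (which differ by the fixed constant $p'$) preserves the asymptotic.

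For the upper bound, fix $x_0 \in M$ and a small $R>0$, and work in normal coordinates on $B_R(x_0)$; the pull-back metric is a $C^2$-perturbation of the Euclidean metric $g_{\mathrm{eucl}}$ with $\|g - g_{\mathrm{eucl}}\|_{C^2(B_R)} = O(R^2)$, the constant depending on the curvature bounds. As in \cite{MS,KaSa}, construct a Moser--Adams sequence $w_q \in C_c^k(B_R)$ on the Euclidean ball with $\|\nabla^k w_q\|_p = 1$ and $\bigl(\int_{B_R}|w_q|^q\,dx\bigr)^{p/q} = L\, q^{-(p-1)}(1+o(1))$; for $k=1$ these are the classical Moser functions, and for $k>1$ one uses their smoothed radial analogues from Adams' original construction. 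Pulling back to $M$ via the exponential map and comparing the higher-order Dirichlet integral and the $L^q$-integral with respect to $g$ against their Euclidean counterparts produces multiplicative errors that are $O(R^2)$ uniformly in $q$. Hence $\limsup q^{p-1} S_q \le (1+O(R^2))\,L$, and letting $R\to 0$ finishes the proof. The chief technical difficulty is the Euclidean construction of the Moser--Adams sequence realizing the sharp $q$-asymptotic for $k>1$, where the failure of Polya--Szeg\H{o} for higher gradients rules out symmetrization and forces an explicit radial construction with carefully chosen concentration scale; once this Euclidean input is in hand, the transfer to the pinched Hadamard manifold via normal coordinates is routine.
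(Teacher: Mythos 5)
Your proposal follows the paper's proof in both directions: the lower bound via Theorem~\ref{theorem1}, retaining single terms of the exponential series, Stirling, and interpolation, is exactly the paper's argument; and the upper bound via Moser--Adams concentrating test functions is also the paper's route (the paper's $u_R(x)=\psi_R(d_g(x_0,x))$ is precisely the pullback of the Euclidean radial profile through $\exp_{x_0}$, so your ``pull back from normal coordinates'' construction and the paper's are literally the same test functions, and the paper then quotes the Fontana--Adams computation of $\int_M|\nabla_g^k u_R|^p$ and optimizes over $\log(1/R)$).

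One correction: for $w_q$ normalized by $\|\nabla^k w_q\|_p=1$, you wrote $\bigl(\int_{B_R}|w_q|^q\,dx\bigr)^{p/q}=L\,q^{-(p-1)}(1+o(1))$, which would make the Rayleigh quotient $\frac{1}{(\int|w_q|^q)^{p/q}}$ blow up like $q^{p-1}/L$ and give $q^{p-1}S_q\to\infty$, the wrong conclusion. Since $S_q\sim L\,q^{-(p-1)}$, the $L^q$ side must \emph{grow}: the correct asymptotic is $\bigl(\int_{B_R}|w_q|^q\,dx\bigr)^{p/q}=L^{-1}q^{p-1}(1+o(1))$, equivalently the Rayleigh quotient of $w_q$ equals $L\,q^{-(p-1)}(1+o(1))$. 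With that exponent fixed the argument is sound and coincides with the paper's.
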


\noindent We will establish Theorem \ref{theorem1} by converting it into an estimate on operators given by kernels, an idea initiated in this case by Adams \cite{A} and developed further in \cite{Fonta}, \cite{FonMo}, and \cite{FontaMo}. We will implement this scheme by writing the function $u$ as integral operators given by kernels. The properties of these kernels leading to Adams type inequalities with best constants have been given in \cite{FontaMo}. The real issue in our case is to establish these conditions on kernels. For instance, in order to hold true, these properties require some (locally) uniform control of the kernels in terms of the Riemannian distance between the variables. In the constant curvature case explicit formulas make this job easy, but in our case we lack these explicit formulas for kernels. Also, compared to the case of smooth compact Riemannian manifolds, where the curvature tensor and all its covariant derivatives have bounded norms, we highlight that only bounds on the second derivatives of the Riemann metric (through the sectional curvature) are actually needed in order to control the kernels. This is done by a careful analysis involving, among other things, comparison theorems from Riemannian geometry.\\\\
We divide this article into four sections. Section 2 will be devoted to preliminary materials, Section 3 will develop the details required on Green's function, and the proof of main theorems will be given in Section 4. 

\vspace{0.3cm}

{ \it Acknowledgments. The authors would like to thank Gilles Carron for useful discussions regarding Theorem 2.6.}

\section{Notation and Preliminaries} In this section we will introduce our notation and recall some results from Riemannian geometry which we will be using in this article. For more details and proofs of theorems, we refer to any standard book on Riemannian geometry like \cite{chavel, gallot, petersen}.
\subsection{Notation}
 We will denote by $(M,g)$ a Riemannian manifold with inner product $g(\cdot,\cdot)$. The Ricci and sectional curvatures will be denoted by $Ric_g$ and $ K_g$ respectively.\\\\
A Hadamard manifold is a complete simply connected Riemannian manifold $(M,g)$ with $ K_g(x) \le 0$ for all $x\in M$. We will denote the $n$-dimensional hyperbolic space of constant curvature $\lambda < 0$ by $\mathbb{H}^n_\lambda$.\\\\
The Riemannian distance between $x$ and $y$ will be denoted by $d_g(x,y)$ and the Riemannian measure will be denoted by $\mu_g.$ The Riemannian volume of the  Euclidean unit sphere $\S^{n-1}$ will be denoted by $\omega_{n-1}$.\\\\
 Let us also denote by $\nabla_g $ and $\Delta_g= + \text{Tr} \, \text{Hess}$  the gradient and the Laplace Beltrami operator associated with the metric $g$. Moreover, for a positive 
 integer $k,$ let $\Delta_g^k$ be the $k$-th iterated Laplacian, we define the $k$-th order gradient $\nabla_g^k$ by,
 \begin{align}
  \nabla_g^k :=
  \begin{cases}
   \Delta_g^{\frac{k}{2}} , \ \ \ \ \ \ \ \mbox{if} \ k \ \mbox{is even}, \\
   \nabla_g \Delta_g^{\frac{k-1}{2}} , \ \ \mbox{if} \ k \ \mbox{is odd},
  \end{cases}
\end{align} 
For $u\in C^k(M)$ and  $x\in M$, we define $|\nabla_g^ku(x)|$ as the modulus of $\Delta_g^{\frac{k}{2}}u(x)$ when $k$ is even, and $\sqrt{g\left(\nabla_g \Delta_g^{\frac{k-1}{2}}u(x),\nabla_g \Delta_g^{\frac{k-1}{2}}u(x)\right)}$ when $k$ is odd.
\subsection{Some results from Riemannian Geometry.} One of the main difficulties we will face in proving our result comes from the infinite measure of these manifolds. First, we will recall some results on the volume.\\\\
Let $V^n_\lambda(r)$ denote the volume of a ball with radius $r>0$ in the $n$-dimensional space form of constant curvature $\lambda \in (-\infty,0]$, then 
\begin{align}\label{volume-model}
  V^n_\lambda(r)=
  \begin{cases}
   \frac{\omega_{n-1}}{n}r^n , \ \ \ \ \ \ \ \ \ \ \ \ \ \mbox{if} \ \ \lambda = 0, \\
   \frac{\omega_{n-1}}{a^n}\int\limits_0^{ar}\sinh^{n-1}s\;ds , \ \ \mbox{if} \ \ \lambda = -a^2 <0
  \end{cases}
\end{align} 
In the general case, we have the Bishop-Gromov volume comparison theorem:
\begin{thm}\label{volume} Let $(M,g)$ be an $n$-dimensional complete Riemannian manifold with $Ric \ge (n-1)\lambda $ for some $\lambda \in \R$ then for any $x\in M$ the volume ratio 
 $$\frac{\mu_g(B(x,r))}{V^n_\lambda(r)} $$
 is a nonincreasing function of $r$. In particular
 $$ \frac{\mu_g(B(x,r))}{V^n_\lambda(r)} \le \lim\limits_{R\rightarrow 0}\frac{\mu_g(B(x,R))}{V^n_\lambda(R)} =1$$
and hence $\mu_g(B(x,r))\le V^n_\lambda(r). $
\end{thm}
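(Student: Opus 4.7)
The plan is to establish the theorem by a comparison of volume elements in geodesic polar coordinates, followed by a standard monotonicity lemma for integrals. Fix $x \in M$ and work in geodesic polar coordinates $(r,\theta) \in (0,\infty) \times \S^{n-1}$ based at $x$. Inside the segment domain $D_x \subset T_xM$ on which the exponential map $\Exp_x$ is a diffeomorphism onto $M \setminus \mathrm{Cut}(x)$, write the Riemannian volume element as $d\mu_g = J(r,\theta)\, dr\, d\theta$, where $J(r,\theta) = \sqrt{\det g_{ij}(r,\theta)}$ is the Jacobian of $\Exp_x$ in polar coordinates. Extend $J$ by zero past the cut locus.

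The first, and main analytic step, is to derive the pointwise Bishop comparison along each radial geodesic $\gamma_\theta$: if one denotes by $J_\lambda(r) = s_\lambda(r)^{n-1}$ the corresponding Jacobian in the $n$-dimensional space form of constant curvature $\lambda$ (with $s_\lambda$ the appropriate sine/sinh function), then for $\theta$ fixed the ratio $J(r,\theta)/J_\lambda(r)$ is non-increasing in $r$. To obtain this I would set $\psi(r,\theta) := \partial_r \log J(r,\theta)$, note that computing $\Delta_g r$ in polar coordinates gives $\psi(r,\theta) = \Delta_g r(\Exp_x(r\theta))$, and then use the Riccati equation $\partial_r \psi + \frac{\psi^2}{n-1} \le \partial_r \psi + |\hes r|^2 = -\mathrm{Ric}_g(\partial_r,\partial_r)$ for the radial distance function. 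The hypothesis $\mathrm{Ric}_g \ge (n-1)\lambda$ converts this into a differential inequality whose model-space solution is exactly $\psi_\lambda(r) = (n-1) s_\lambda'(r)/s_\lambda(r)$. A standard ODE comparison (with the right initial asymptotics $\psi(r,\theta) \sim (n-1)/r$ and $\psi_\lambda(r) \sim (n-1)/r$ as $r \to 0^+$) yields $\psi(r,\theta) \le \psi_\lambda(r)$, which integrates to monotonicity of $\log(J(r,\theta)/J_\lambda(r))$, i.e.\ of the ratio itself. Past the cut locus $J$ drops to $0$ and monotonicity is trivially preserved.

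Next I would pass from the radial monotonicity to the monotonicity of the ball ratio. Writing
\begin{equation*}
\mu_g(B(x,r)) = \int_{\S^{n-1}} \int_0^{r} J(s,\theta)\, ds\, d\theta, \qquad V^n_\lambda(r) = \int_{\S^{n-1}} \int_0^{r} J_\lambda(s)\, ds\, d\theta,
\end{equation*}
I would invoke the elementary Gromov lemma: if $f,g \ge 0$ on $(0,\infty)$ with $f/g$ non-increasing, then $\int_0^r f / \int_0^r g$ is also non-increasing. Applying this for each fixed $\theta$ and then integrating over $\S^{n-1}$ (again using a Fubini-type monotonicity: the quotient of two integrals whose integrands are pointwise-monotone quotients is itself non-increasing) yields the asserted monotonicity of $\mu_g(B(x,r))/V^n_\lambda(r)$.

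Finally, the normalization at $r \to 0$ comes from the fact that in normal coordinates at $x$ the metric is $g_{ij}(y) = \delta_{ij} + O(|y|^2)$, so $J(r,\theta) = r^{n-1}(1+O(r^2))$ uniformly in $\theta$, while $J_\lambda(r) = r^{n-1}(1 + O(r^2))$ as well; hence $\mu_g(B(x,r))/V^n_\lambda(r) \to 1$ as $r \to 0$, giving the pointwise bound $\mu_g(B(x,r)) \le V^n_\lambda(r)$. The one subtle point I would be most careful about is the handling of the cut locus: one must justify that extending $J$ by zero past it preserves the Riccati-type comparison; this is standard and follows from the fact that the cut locus has measure zero and that $J$ is continuous and non-negative, vanishing exactly where geodesics stop being minimizing.
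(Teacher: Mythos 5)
Your proposal is a correct and complete sketch of the standard proof of the Bishop--Gromov volume comparison theorem. Note, however, that the paper does not actually prove this statement: it is recorded in Section~2 as a preliminary, with the reader referred to standard texts (\cite{chavel, gallot, petersen}), and the authors remark only that it ``follows from estimates on the volume element due to Bishop'' (their Theorem~\ref{volumeelement}). So there is no proof in the paper to compare against, and I evaluate your argument on its own.

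The chain of steps you outline is the canonical one and is sound: the identity $\psi := \partial_r \log J = \Delta_g r$ in geodesic polar coordinates; the Riccati/Bochner inequality $\partial_r \psi + \psi^2/(n-1) \le -\mathrm{Ric}(\partial_r,\partial_r) \le -(n-1)\lambda$, with $\psi^2/(n-1) \le |\hes r|^2$ from Cauchy--Schwarz; the ODE comparison with the model $\psi_\lambda$, sharing the boundary asymptotic $\sim (n-1)/r$, to conclude $\psi \le \psi_\lambda$ --- this is exactly the first displayed inequality in the paper's Theorem~\ref{volumeelement}, since $\psi = \tfrac{n-1}{r} + \partial_r \ln A_x(r,\theta)$ and $\psi_\lambda = (n-1)b\coth(br)$ when $\lambda = -b^2$; integration to get $J(\cdot,\theta)/J_\lambda$ non-increasing; Gromov's quotient-of-integrals lemma applied radially for fixed $\theta$; and the $\theta$-average, which is legitimate because $J_\lambda$ and hence $A_\lambda$ is $\theta$-independent, so the ball ratio is literally an average of the non-increasing single-ray ratios. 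The $r \to 0$ normalization is also right.

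Two small points of care. First, your justification at the cut locus --- that $J$ is ``continuous and non-negative, vanishing exactly where geodesics stop being minimizing'' --- is not quite accurate: $J$ vanishes at the first conjugate point, but the cut time can occur strictly before that, in which case extending $J$ by zero produces a downward jump. This is harmless for the argument (a downward jump preserves monotonicity of the ratio, since $J_\lambda>0$), but the reason is that monotonicity survives a drop to zero, not continuity of $J$. Second, the Riccati comparison near $r=0$ needs a moment's care because both $\psi$ and $\psi_\lambda$ blow up there; the usual device is to pass to $\phi := J^{1/(n-1)}$, which satisfies $\phi'' + \lambda\phi \le 0$ with $\phi(0)=0$, $\phi'(0)=1$, and compare with $s_\lambda$ by a Sturm-type argument. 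This is equivalent to what you wrote but sidesteps the singular initial condition cleanly.
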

This result follows from estimates on the volume element due to Bishop that we will also use in the following:
\begin{thm}\label{volumeelement} Let $(M,g)$ be an $n$ dimensional complete Riemannian manifold. For $x\in M$, let $r^{n-1}A_x(r,\theta)\;d\theta dr$ denotes the Riemannian measure in normal coordinates centered at $x$.

Let us first assume that $Ric \ge -(n-1)b^2 $ for some $b >0$, then 
$$ \frac{n-1}{r} + \frac{\partial}{\partial r} \big(\ln( A_x(r,\theta))\big) \leq  (n-1)b\, \coth (br),$$
and
$$r^{n-1}A_x(r,\theta)\le \left(\frac{\sinh (br)}{b}\right)^{n-1}. $$

If $(M,g)$ now satisfies $K_{g} \leq -a^2\leq 0$, then 
$$ \frac{n-1}{r} + \frac{\partial}{\partial r} \big(\ln( A_x(r,\theta))\big) \geq  (n-1)a\, \coth (ar).$$
\end{thm}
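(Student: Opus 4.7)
The statement combines a Bishop/Laplace comparison (a trace inequality needing only a Ricci lower bound) with a Hessian comparison (a matrix-level inequality needing a sectional upper bound), and the natural approach in both cases is the Riccati equation satisfied by the shape operator of geodesic spheres. I would fix $x\in M$ and a unit $\theta\in T_xM$, set $\gamma(r)=\exp_x(r\theta)$, and pick a parallel orthonormal frame $E_1,\dots,E_{n-1}$ along $\gamma$ spanning $\gamma'(r)^\perp$. Let $J_i$ be the Jacobi field along $\gamma$ with $J_i(0)=0$ and $J_i'(0)=E_i(0)$, and write $J_i(r)=\sum_j A_{ij}(r)E_j(r)$. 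Then the Jacobian of $\exp_x$ at $r\theta$ equals $\det A(r)$, so $r^{n-1}A_x(r,\theta)=\det A(r)$ with $A(r)=rI+O(r^3)$. Introducing the shape operator $S(r):=A'(r)A(r)^{-1}$, one has the matrix Riccati identity $S'+S^2+\mathcal{R}=0$, where $\mathcal{R}(r)v=R(v,\gamma'(r))\gamma'(r)$; taking the trace,
\[ m(r):=\mathrm{tr}\,S(r)=(\ln\det A(r))'=\frac{n-1}{r}+\partial_r\ln A_x(r,\theta).\]

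For the Ricci part, I trace the Riccati equation and use $\mathrm{tr}(S^2)\ge m^2/(n-1)$ (Cauchy--Schwarz applied to $S$ restricted to $\gamma'(r)^\perp$), giving
\[ m'(r)+\frac{m(r)^2}{n-1}\le -Ric_g(\gamma',\gamma')\le (n-1)b^2.\]
The substitution $\phi(r):=(\det A(r))^{1/(n-1)}$ linearizes this to $\phi''\le b^2\phi$ with $\phi(0)=0$, $\phi'(0)=1$, and I compare $\phi$ with the model $\phi_b(r):=\sinh(br)/b$ via the Wronskian $W:=\phi_b\phi'-\phi_b'\phi$. Since $W(0)=0$ and $W'=\phi_b(\phi''-b^2\phi)\le 0$, one has $W\le 0$, which yields both $\phi'/\phi\le \phi_b'/\phi_b$ (i.e.\ the mean-curvature bound $m\le (n-1)b\coth(br)$) and, after integration, $\phi\le\phi_b$ (i.e.\ the volume estimate $r^{n-1}A_x(r,\theta)\le(\sinh(br)/b)^{n-1}$).

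For the sectional part, under $K_g\le -a^2$ the curvature endomorphism satisfies $\langle \mathcal{R}(r)v,v\rangle\le -a^2|v|^2$ on $\gamma'(r)^\perp$, i.e.\ $\mathcal{R}(r)\le -a^2\,I$ as self-adjoint operators, so the Riccati equation upgrades to $S'+S^2\ge a^2 I$. The comparison model $S_a(r):=a\coth(ar)\,I$ satisfies the equality $S_a'+S_a^2=a^2 I$ with the same singular behaviour $S(r),S_a(r)=I/r+O(r)$ as $r\to 0^+$. I would then set $T(r):=S(r)-S_a(r)$, noting $T(r)\to 0$ as $r\to 0^+$, and apply the standard matrix Riccati comparison lemma (Eschenburg's lemma, or equivalently Rauch's theorem, both of which apply on a manifold with $K_g\le -a^2\le 0$ since such a manifold has no conjugate points) to conclude $T(r)\ge 0$, i.e.\ $S(r)\ge a\coth(ar)\,I$. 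Taking the trace gives $m(r)\ge (n-1)a\coth(ar)$.

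The scalar Sturm argument underlying the Ricci part is routine; the real technical point is the matrix comparison needed for the sectional upper bound, where the eigenvectors of $S(r)$ rotate with $r$ so an eigenvalue-by-eigenvalue comparison is not immediate, and the $I/r$ singularity at $r=0$ complicates a direct Gronwall analysis of the ODE satisfied by $T$. The cleanest way past this obstacle is to invoke Rauch's theorem (applicable precisely because there are no conjugate points in the $K_g\le -a^2\le 0$ setting), and then descend to the trace $m=\mathrm{tr}\,S$.
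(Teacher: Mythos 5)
The paper does not actually prove this statement: it is presented as a known result from Riemannian geometry, with the reader referred to the standard references (Chavel, Gallot--Hulin--Lafontaine, Petersen) listed at the start of Section~2. So there is no in-paper proof to compare against; your task is to check the proposal on its own merits.

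Your argument is correct and is essentially the textbook derivation. The Jacobi-field matrix $A(r)$, the Riccati identity $S'+S^2+\mathcal{R}=0$ for $S=A'A^{-1}$, the trace reduction combined with $\operatorname{tr}(S^2)\ge(\operatorname{tr} S)^2/(n-1)$, the linearization via $\phi=(\det A)^{1/(n-1)}$, and the Wronskian/Sturm comparison with $\phi_b=\sinh(br)/b$ together yield both the mean-curvature bound $m\le(n-1)b\coth(br)$ and the volume bound $\phi\le\phi_b$, which is exactly $r^{n-1}A_x\le(\sinh(br)/b)^{n-1}$; and identifying $m$ with $\Delta_g d_g(x,\cdot)=\frac{n-1}{r}+\partial_r\ln A_x$ closes the loop. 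For the sectional part you correctly upgrade the trace argument to the matrix inequality $S\ge a\coth(ar)I$ via Eschenburg's Riccati comparison or Rauch, which applies because $K_g\le 0$ rules out conjugate points; taking the trace gives the desired lower bound. Two small points worth making explicit if this were to be written out in full: (i) the Cauchy--Schwarz step $\operatorname{tr}(S^2)\ge m^2/(n-1)$ uses that $S$ is self-adjoint, which holds because the Jacobi matrix $A$ with $A(0)=0$, $A'(0)=I$ is a Lagrangian solution (the Wronskian $A^{\ast}A'-(A')^{\ast}A$ vanishes identically); and (ii) in the Ricci-only part the manifold may have conjugate points, so the estimates hold, as usual in Bishop's theorem, on the maximal interval $(0,r)$ where $\det A>0$, with $A_x$ understood to vanish beyond the cut locus. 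Neither affects the application in the paper since there the manifolds are Hadamard. You could also have derived the sectional statement directly by taking the trace of Theorem~2.4 (the Hessian comparison theorem), which the paper states immediately afterwards; that is a shortcut worth noticing, though your self-contained route through the Riccati equation is equally valid.
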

Next, we recall the Hessian comparison theorem: 
\begin{thm}\label{Hessian} Let $(M,g)$ be a Riemannian manifold such that  $K_{g} \leq -a^2$ with $a>0$. Let $y\in M$, then at any point $x \neq y$, it holds 
$$\hes (d_{g}(y,\cdot) )(x)\geq  {a} \coth (a \,d_g(y,x))\bar{g}, $$
where $\hes$ denotes the Hessian of the distance function and $\bar{g}$ the restriction of the metric $g$ to $\{\nabla_g d_g(y,\cdot)(x)\}^{\perp} \subset T_xM$. Taking the trace, we get
$$ \Delta_{g} (d_g(y,\cdot)) (x) \geq (n-1)  {a}\, \coth (a \,d_g(y,x)).$$

If $a=0$ then
$$ \hes (d_{g}(y,\cdot) )(x)\geq \frac{1}{d_g(y,x)}\bar{g} \mbox{ and } \Delta_{g} (d_g(y,\cdot)) (x) \geq \frac{(n-1)}{d_g(y,x)}.$$
\end{thm}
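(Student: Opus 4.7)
\emph{Proof plan.} My approach is a Jacobi field analysis of the distance function along a minimizing geodesic, combined with a Riccati comparison against the constant curvature model.

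First, fix $y$ and work at a point $x\neq y$. Since $M$ is Hadamard, $\Exp_y$ is a diffeomorphism, so $r(\cdot):=d_g(y,\cdot)$ is smooth on $M\setminus\{y\}$ and there is a unique unit-speed minimizing geodesic $\gamma:[0,\ell]\to M$ from $y$ to $x$, with $\ell=d_g(y,x)$ and $\gamma'(\ell)=\nabla_g r(x)$. For $v\in T_xM$ with $v\perp \gamma'(\ell)$, let $J$ be the unique normal Jacobi field along $\gamma$ with $J(0)=0$ and $J(\ell)=v$. A standard identity (obtained from the second variation of arc length, or by differentiating the radial geodesic variation that $J$ represents) gives
$$\hes(r)(v,v)=g\bigl(\nabla_{\gamma'}J,J\bigr)\big|_{t=\ell}.$$
This reduces the Hessian bound to controlling the derivative at $t=\ell$ of a Jacobi field vanishing at $0$.

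Next, I encode this in a matrix Riccati equation. Define the self-adjoint operator $S(t)$ on $\gamma'(t)^\perp$ by $S(t)J(t)=\nabla_{\gamma'}J(t)$ for every normal Jacobi field $J$ with $J(0)=0$. The Jacobi equation forces
$$S'+S^2+R_\gamma=0,\qquad R_\gamma w := R(w,\gamma')\gamma',$$
and the hypothesis $K_g\le -a^2$ translates to $g(R_\gamma w,w)\le -a^2|w|^2$, i.e.\ $R_\gamma \le -a^2\,I$ as symmetric operators. In the model space of constant curvature $-a^2$ the corresponding operator is $S_0(t)=a\coth(at)\,I$, which solves $S_0'+S_0^2-a^2 I=0$; when $a=0$ one instead has $S_0(t)=t^{-1}I$. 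In both cases $S$ and $S_0$ have the same singular asymptotic $t^{-1}I$ as $t\to 0^+$, since all normal Jacobi fields vanishing at $0$ behave like $t\,\nabla_{\gamma'}J(0)+O(t^3)$.

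Finally, I perform the Riccati comparison to conclude $S(t)\ge S_0(t)$ on $(0,\ell]$. Subtracting the two Riccati equations yields
$$(S-S_0)'+S(S-S_0)+(S-S_0)S_0=-(R_\gamma+a^2 I)\ge 0,$$
a linear equation in the symmetric operator $S-S_0$ with nonnegative source term. Together with the vanishing of $S-S_0$ as $t\to 0^+$, a matrix Gronwall argument forces $S-S_0\ge 0$ for all $t\in(0,\ell]$. Evaluated at $t=\ell$ this is exactly $\hes(r)(v,v)\ge a\coth(a\ell)\,|v|^2$ for $v\perp\nabla_g r(x)$, i.e.\ $\hes(r)\ge a\coth(ar)\bar{g}$. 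Taking the trace over an orthonormal basis of $\gamma'(\ell)^\perp$ and using $\hes(r)(\nabla_g r,\cdot)=0$ (since $|\nabla_g r|\equiv 1$) produces the stated Laplacian bound. The case $a=0$ is identical with $S_0=t^{-1}I$.

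The main obstacle is the matrix Riccati comparison with singular initial data, where one cannot directly invoke scalar Gronwall. A clean route, should the direct operator argument get delicate, is to replace it by a variational/index-form argument: bound $\int_0^\ell(|J'|^2-g(R_\gamma J,J))\,dt$ from below using $R_\gamma\le -a^2 I$ and compare the resulting minimum against the explicit minimizer in the model, which yields the same conclusion and avoids the need to track operator inequalities along the whole geodesic.
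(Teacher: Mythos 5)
The paper does not actually prove this statement: Theorem~\ref{Hessian} is recalled as a standard comparison theorem, with the reader pointed to the textbooks \cite{chavel,gallot,petersen}. So there is no ``paper's own proof'' to compare against; what you have written is the standard textbook proof, and it is essentially correct. Two remarks are worth making.

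First, you silently strengthen the hypothesis to ``$M$ Hadamard'' so that $\Exp_y$ is a global diffeomorphism and $r=d_g(y,\cdot)$ is smooth on $M\setminus\{y\}$. The theorem as stated only assumes $K_g\leq -a^2$, under which the distance function is smooth only up to the cut locus of $y$; strictly speaking the inequality should be read there. Since every application in the paper is on a Hadamard manifold (where the cut locus is empty), this discrepancy is harmless, but it should be made explicit rather than assumed.

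Second, your primary route -- the matrix Riccati comparison -- is correct in substance, but you are right to flag that ``a matrix Gronwall argument'' glosses over the genuine subtlety of singular initial data. The usual fix is to note, from the Taylor expansion $S(t)=t^{-1}I-\tfrac{t}{3}R_\gamma(0)+O(t^3)$ and $S_0(t)=t^{-1}I+\tfrac{a^2 t}{3}I+O(t^3)$, that $S(\epsilon)-S_0(\epsilon)\geq 0$ for small $\epsilon>0$ (using $R_\gamma(0)+a^2I\leq0$), and then run the comparison on $[\epsilon,\ell]$ with nonsingular initial data before letting $\epsilon\to 0$. Alternatively, the index-form argument you sketch at the end is cleaner and avoids the operator ODE entirely: writing
\[
\hes(r)(v,v)=I(J,J)=\int_0^\ell\bigl(|J'|^2-g(R_\gamma J,J)\bigr)\,dt\;\geq\;\int_0^\ell\bigl(|J'|^2+a^2|J|^2\bigr)\,dt
\]
for the Jacobi field $J$ with $J(0)=0$, $J(\ell)=v$, then minimizing the last quadratic functional over all normal fields with these boundary values (the minimizer is $J_0(t)=\tfrac{\sinh(at)}{\sinh(a\ell)}P(t)$ with $P$ the parallel transport of $v$, giving value $a\coth(a\ell)|v|^2$), yields the bound directly; no Riccati comparison is needed. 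If you keep both routes, I would promote the index-form argument to the main proof and relegate the Riccati approach to a remark.

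Your treatment of the trace (using $\hes(r)(\nabla_g r,\cdot)=0$, which follows from $|\nabla_g r|\equiv 1$) and of the case $a=0$ ($S_0(t)=t^{-1}I$, model minimizer $J_0(t)=\tfrac{t}{\ell}P(t)$) are both correct.
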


%We will mainly use the above comparison result for estimating the Hessian for $f(d_g(y,\cdot)$ where
% $f :(0,\infty) \rightarrow \R$ be non increasing, convex, and $C^2$ thanks to the formula
% 
% $$ D_{g}^2 (f(d_{g}(y,\cdot))= f''(d_{g}(y,\cdot))\, D_{g}^2 (d_{g}(y,\cdot) ) + f'(d_{g}(y,\cdot))\, D(d_g(y,\cdot)) \otimes D(d_g(y,\cdot)) $$ 
Finally, we recall the Laplacian comparison theorem:
\begin{thm}\label{Laplace} Let $(M,g)$ be a Riemannian manifold such that  $Ric_{g} \geq -(n-1) b^2$. Let $y\in M$, then at any point $x \neq y$, it holds 
$$ \Delta_{g} (d_g(y,\cdot))(x) \leq (n-1)  {b}\, \coth (b \,d_g(x,y)).$$
\end{thm}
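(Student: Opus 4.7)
This is the classical Laplacian comparison theorem under a Ricci lower bound, and I would follow the standard Riccati-equation approach along radial geodesics, with a barrier argument to handle the cut locus. Fix $y \in M$, set $r(x) := d_g(y,x)$, and first work at a point $x \in M \setminus (\{y\} \cup \mathrm{Cut}(y))$. Let $\gamma:[0,R]\to M$ be the unit-speed minimizing geodesic from $y$ to $x$. Restricted to $\dot\gamma^\perp$, the operator $S(s) := \hes\, r|_{\gamma(s)}$ (the shape operator of the geodesic sphere through $\gamma(s)$) satisfies the matrix Riccati equation
$$\nabla_{\dot\gamma} S + S^2 + R(\cdot,\dot\gamma)\dot\gamma = 0.$$
Taking the trace and writing $H(s) := \Delta_g r(\gamma(s)) = \mathrm{tr}\, S(s)$, the Cauchy--Schwarz inequality $\mathrm{tr}(S^2) \ge (\mathrm{tr}\, S)^2/(n-1)$ together with the Ricci bound gives the scalar Riccati inequality
$$H'(s) + \frac{H(s)^2}{n-1} \le -\mathrm{Ric}_g(\dot\gamma,\dot\gamma) \le (n-1)b^2.$$

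Next I would compare $H$ with the model function $h(s) := (n-1)b\coth(bs)$, which is the exact solution of $h' + h^2/(n-1) = (n-1)b^2$ corresponding to the constant-curvature case $K \equiv -b^2$. Both $H(s)$ and $h(s)$ are asymptotic to $(n-1)/s$ as $s \to 0^+$ (the former by a normal-coordinates computation at $y$, the latter by direct expansion), so $H(s) - h(s) \to 0$. Subtracting the two equations yields
$$(H-h)'(s) + \frac{H(s)+h(s)}{n-1}(H-h)(s) \le 0,$$
and a standard Gronwall-type argument (using positivity of $h$ to keep the coefficient controlled) forces $H \le h$ on all of $(0,R]$. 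This establishes the pointwise bound $\Delta_g r(x) \le (n-1)b\coth(br(x))$ on $M \setminus (\{y\} \cup \mathrm{Cut}(y))$.

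The main technical obstacle is extending the inequality across the cut locus, where $r$ fails to be smooth. The clean way to handle this is Calabi's barrier trick: given $x_0 \in \mathrm{Cut}(y)$, pick a minimizing geodesic $\sigma$ from $y$ to $x_0$, and for small $\epsilon > 0$ set $y_\epsilon := \sigma(\epsilon)$ and $r_\epsilon(x) := \epsilon + d_g(y_\epsilon,x)$. Then $x_0 \notin \mathrm{Cut}(y_\epsilon)$, so $r_\epsilon$ is smooth near $x_0$; the triangle inequality gives $r_\epsilon \ge r$ with equality at $x_0$; and applying the smooth case to $r_\epsilon$ at $x_0$ followed by $\epsilon \to 0$ yields the desired bound at $x_0$ in the barrier (hence distributional) sense. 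This promotes the pointwise inequality to a global statement on $M \setminus \{y\}$, completing the proof.
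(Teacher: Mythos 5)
Your proof is correct, but note that the paper does not actually prove Theorem~\ref{Laplace}: it is stated as one of the standard comparison theorems recalled in Section~2, with a blanket reference to textbooks (\cite{chavel, gallot, petersen}) for the proofs. Your Riccati-comparison argument (trace the matrix Riccati equation along a radial geodesic, use $\mathrm{tr}(S^2)\ge(\mathrm{tr}\,S)^2/(n-1)$ and the Ricci lower bound to get the scalar inequality, compare with $h(s)=(n-1)b\coth(bs)$ using the short-time asymptotics $H,h\sim(n-1)/s$, then extend past the cut locus via Calabi's upper-barrier trick with $y_\epsilon=\sigma(\epsilon)$) is exactly one of the standard textbook proofs and is carried out correctly, so there is nothing to add.
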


\subsection{Poincar\'e type Inequalities} In this final subsection we recall some inequalities in Sobolev space and deduce some corollaries.\\
The following theorem is due to McKean for $p=2$ (see \cite{chavel}) and generalized further by Strichartz \cite[Theorem 5.4]{Stri}.

\begin{thm}\label{mckean} Let $(M,g)$ be a Hadamard manifold with $ K_g \le -a^2<0$ then for $1\le p < \infty $ the inequality
\begin{equation}\label{mc-st}
\left(\frac{(n-1)a}{p}\right)^p \int\limits_M |u|^p \ d\mu_g \ \le  \ \int\limits_M|\nabla_g u|^p \ d\mu_g
\end{equation}
holds for all $u\in C_c^\infty(M).$  
\end{thm}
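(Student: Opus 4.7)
My plan is to reduce this to the classical argument of McKean via the Hessian/Laplacian comparison theorem already stated in the paper. Fix a base point $x_0\in M$ and let $r(x)=d_g(x_0,x)$. Since $(M,g)$ is Hadamard, $r$ is smooth on $M\setminus\{x_0\}$ and $|\nabla_g r|=1$ there. By Theorem \ref{Hessian}, the curvature bound $K_g\le -a^2$ yields
\[
\Delta_g r(x)\ \ge\ (n-1)a\coth(a\,r(x))\ \ge\ (n-1)a
\qquad\text{for all }x\neq x_0,
\]
since $\coth(t)\ge 1$ on $(0,\infty)$. This gives us a globally defined vector field $X=\nabla_g r$ with $\mathrm{div}_g X\ge (n-1)a$ pointwise away from $x_0$ and $\|X\|\equiv 1$.

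Next I would test this divergence bound against $|u|^p$ for $u\in C_c^\infty(M)$. Formally, multiplying and integrating,
\[
(n-1)a\int_M |u|^p\,d\mu_g\ \le\ \int_M |u|^p\,\Delta_g r\,d\mu_g
\ =\ -\int_M \nabla_g(|u|^p)\!\cdot\! \nabla_g r\,d\mu_g
\ \le\ p\int_M |u|^{p-1}|\nabla_g u|\,d\mu_g.
\]
To justify the integration by parts rigorously, I would work on $M\setminus\overline{B(x_0,\varepsilon)}$. The boundary term over $\partial B(x_0,\varepsilon)$ is controlled by $\|u\|_\infty^p\cdot\mu_g(\partial B(x_0,\varepsilon))$, which tends to $0$ as $\varepsilon\to 0$ by Theorem \ref{volume}. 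For $p\ge 2$, $|u|^p$ is $C^1$; for $1\le p<2$, I would approximate $|u|^p$ by $(u^2+\delta)^{p/2}$, integrate by parts, and let $\delta\downarrow 0$ using dominated convergence (both sides are dominated by fixed $L^1$ quantities coming from the compact support of $u$).

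Finally I would apply H\"older's inequality with conjugate exponents $p/(p-1)$ and $p$ to the right-hand side,
\[
p\int_M |u|^{p-1}|\nabla_g u|\,d\mu_g\ \le\ p\left(\int_M |u|^p\,d\mu_g\right)^{\!(p-1)/p}\!\left(\int_M |\nabla_g u|^p\,d\mu_g\right)^{\!1/p},
\]
combine with the previous chain of inequalities, divide through by $\bigl(\int_M|u|^p\bigr)^{(p-1)/p}$ (trivially true when this vanishes), and raise to the $p$-th power. This produces exactly $\bigl(\tfrac{(n-1)a}{p}\bigr)^p\int|u|^p\le\int|\nabla_g u|^p$.

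The main obstacle is essentially technical rather than conceptual: carefully justifying the integration by parts at the singularity $x_0$ (where $r$ fails to be smooth) and for non-smooth $|u|^p$ when $p<2$. Both issues are handled by a routine cutoff/regularization argument, so once Theorem \ref{Hessian} is invoked the rest is standard. I would also note in passing that the constant is sharp, as verified by plugging in radial test functions of the form $u_\lambda=e^{-\lambda r}\chi$ with $\chi$ a cutoff and $\lambda\to(n-1)a/p$, but this sharpness is not required for the application in the paper.
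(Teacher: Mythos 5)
Your argument is correct. The paper does not supply a proof of this theorem: it simply attributes the $p=2$ case to McKean and the general case to Strichartz \cite[Theorem~5.4]{Stri}, so there is no in-paper proof to compare against. Your derivation is exactly the standard McKean/Strichartz argument: the Laplacian comparison bound $\Delta_g r \geq (n-1)a\coth(ar)\geq (n-1)a$ from Theorem~\ref{Hessian}, tested against $|u|^p$, followed by integration by parts and H\"older. The two technical points you flag are handled correctly; note only that for $p>1$ the function $|u|^p$ is already $C^1$ with $|\nabla_g(|u|^p)|=p|u|^{p-1}|\nabla_g u|$, so the $\delta$-regularization is genuinely needed only when $p=1$, and the boundary term over $\partial B(x_0,\varepsilon)$ indeed vanishes since $\mu_g(\partial B(x_0,\varepsilon))=O(\varepsilon^{\,n-1})$.
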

%Let us also recall the following multiplicative inequality \cite[Theorem 4.1]{TCOUL}.
\begin{thm} Let $(M,g)$ be a complete Riemannian manifold satisfying $Ric_g \ge -(n-1)b^2 $, and whose spectral gap is positive. Then, there exists a constant $C_p>0$ such that
\begin{equation}\label{multiplicative}
|| |\nabla_gu| ||_p \ \le \ C_p \, ||u||_p^\frac{1}{2} \, || \Delta_g u ||_p^\frac{1}{2}  
\end{equation}
holds for all $u\in C_c^\infty(M).$ 
\end{thm}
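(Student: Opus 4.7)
My plan is to deduce the inequality from an integration by parts followed by H\"older's inequality, together with a standard $L^p$ Calder\'on--Zygmund estimate and an $L^p$ Poincar\'e inequality coming from the spectral gap. For $u\in C_c^\infty(M)$ and $p\ge 2$ (the range $1<p<2$ being handled by the usual $(|\nabla_g u|^2+\varepsilon)^{(p-2)/2}$ regularization and passage to the limit), integration by parts gives
\[
\int_M |\nabla_g u|^p\,d\mu_g \;=\; -\int_M u\,\mathrm{div}\bigl(|\nabla_g u|^{p-2}\nabla_g u\bigr)\,d\mu_g,
\]
and expanding the divergence yields
\[
\int_M |\nabla_g u|^p\,d\mu_g \;=\; -\int_M u|\nabla_g u|^{p-2}\Delta_g u\,d\mu_g \;-\; (p-2)\int_M u|\nabla_g u|^{p-4}\hes(u)(\nabla_g u,\nabla_g u)\,d\mu_g.
\]
When $p=2$ the last term vanishes and Cauchy--Schwarz gives the result at once, so the focus is on $p>2$.

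Using the pointwise inequality $|\hes(u)(X,X)|\le |\hes(u)|\,|X|^2$ and H\"older's inequality with exponents $(p,\,p/(p-2),\,p)$ on each of the two integrals, one finds
\[
\|\nabla_g u\|_p^p \;\le\; \|u\|_p\|\nabla_g u\|_p^{p-2}\|\Delta_g u\|_p \;+\; (p-2)\|u\|_p\|\nabla_g u\|_p^{p-2}\|\hes(u)\|_p,
\]
and dividing through by $\|\nabla_g u\|_p^{p-2}$ gives
\[
\|\nabla_g u\|_p^2 \;\le\; \|u\|_p\bigl(\|\Delta_g u\|_p+(p-2)\|\hes(u)\|_p\bigr).
\]
The key ingredient at this step is the $L^p$ Calder\'on--Zygmund estimate
\[
\|\hes(u)\|_p\;\le\; C\bigl(\|\Delta_g u\|_p+\|u\|_p\bigr),
\]
which is known on any complete Riemannian manifold with Ricci curvature bounded from below, hence under the present assumption $\mathrm{Ric}_g\ge -(n-1)b^2$. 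Substituting yields $\|\nabla_g u\|_p^2\le C'\bigl(\|u\|_p\|\Delta_g u\|_p+\|u\|_p^2\bigr)$.

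To absorb the leftover $\|u\|_p^2$ term one uses the positivity of the spectral gap. On $L^2$ this is the Poincar\'e inequality $\|u\|_2\le \lambda_1^{-1}\|\Delta_g u\|_2$, and by combining the analyticity and uniform boundedness of the heat semigroup $e^{t\Delta_g}$ on $L^p$ (both consequences of the Ricci lower bound) with the $L^2$ gap, the same bound persists in $L^p$, i.e.\ $\|u\|_p\le C''\|\Delta_g u\|_p$. Therefore $\|u\|_p^2\le C''\|u\|_p\|\Delta_g u\|_p$, which closes the estimate and gives $\|\nabla_g u\|_p^2\le C_p^2\|u\|_p\|\Delta_g u\|_p$. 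The main obstacle is justifying the $L^p$ Calder\'on--Zygmund estimate on a noncompact manifold and propagating the spectral gap from $L^2$ to $L^p$; both are classical but nontrivial, relying on heat-kernel and semigroup techniques. A cleaner alternative would be to invoke directly the $L^p$ boundedness of the Riesz transform $\nabla_g(-\Delta_g)^{-1/2}$ (due to Bakry under the same Ricci lower bound), combined with the abstract semigroup identity $\|(-\Delta_g)^{1/2}u\|_p^2\le C\|u\|_p\|\Delta_g u\|_p$, which holds whenever $-\Delta_g$ generates a bounded analytic semigroup on $L^p$ with spectrum bounded away from $0$.
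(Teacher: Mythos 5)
Your closing remark---invoking the $L^p$ boundedness of the Riesz transform $\nabla_g(-\Delta_g)^{-1/2}$ together with the semigroup identity $||(-\Delta_g)^{1/2}u||_p^2 \le C\,||u||_p\,||\Delta_g u||_p$---is precisely the paper's argument: the square-function bound is taken from Komatsu (see also Coulhon--Duong), and the Riesz-transform bound from Auscher--Coulhon--Duong--Hofmann and from Bakry, both valid under a Ricci lower bound and a positive spectral gap. So the alternative you sketch in your last sentence is correct and matches the paper.

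The main body of your proof, however, rests on the global $L^p$ Calder\'on--Zygmund inequality $||\hes(u)||_p \le C(||\Delta_g u||_p + ||u||_p)$, which you assert holds on any complete manifold with Ricci bounded below. That claim is not correct in the generality required here. Bochner's formula gives it for $p=2$, but for general $p$---in particular for $p>2$---this second-order estimate is strictly stronger than the first-order Riesz-transform bound and is not a consequence of a Ricci lower bound alone: the known sufficient conditions (e.g.\ in the work of G\"uneysu and Pigola) require additionally a positive injectivity radius or two-sided curvature control, neither of which is assumed, and counterexamples exist without such extra hypotheses. Since the theorem is ultimately applied with $p=n/k$, which exceeds $2$ whenever $k<n/2$, your main route does not cover the needed range. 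The integration-by-parts and H\"older steps are fine, as is your interpolation argument propagating the spectral gap from $L^2$ to $L^p$; the missing Calder\'on--Zygmund input is exactly what the Riesz-transform approach is designed to avoid, since it controls one derivative of $u$ rather than two.
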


\begin{proof}
The result follows from two main ingredients. First, we use

$$|| \Delta_g ^{1/2} u||_p \leq C_p \, ||u||_p^\frac{1}{2}  \,|| \Delta_g u ||_p^\frac{1}{2},  $$
where $\Delta_g ^{1/2} u$ stands for the fractional Laplacian. This inequality holds on any complete Riemannian manifolds \cite{Komatsu} (see also \cite[Proposition 2.2]{TCOUL}).

Then, we get the result by combining this together with the boundedness of the Riesz transform:

$$|| |\nabla_gu| ||_p \ \le \ C_p  \,|| \Delta_g ^{1/2} u||_p,$$
which holds under these assumptions as proved in \cite[Theorem 1.9]{Auscher}; see also \cite[Theorem 4.1]{Bakry_Riesz}.

\end{proof}
Combining the above two theorems and a recursive application will give the following inequality:\\
\begin{thm}\label{mc-st-k} Let $(M,g)$ be a Hadamard manifold with $Ric_g \ge -(n-1)b^2 $ and $ K_g \le -a^2<0$, then for $1<p <+ \infty $ and positive  $k\in \mathbb{N}$, there exists $C_{k,p}>0$  such that
\begin{equation}
 \int\limits_M |u|^p \ d\mu_g \ \le  \ C_{k,p}\int\limits_M|\nabla_g^k u|^p \ d\mu_g
\end{equation}
holds for all $u\in C_c^\infty(M).$  
\end{thm}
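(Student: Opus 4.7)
The plan is to iterate the two Poincar\'e-type inequalities stated just above --- the McKean--Strichartz inequality (Theorem \ref{mckean}) and the multiplicative inequality \eqref{multiplicative} --- to climb from $k=1$ to arbitrary $k$. First I would note that since $K_g \le -a^2 < 0$, Theorem \ref{mckean} applied with $p=2$ gives a positive spectral gap for $M$, so the hypothesis of the multiplicative inequality is satisfied and both theorems are available under the assumptions of the present statement.

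The base case $k=1$ is precisely Theorem \ref{mckean}. For $k=2$, I would combine the two inequalities: McKean--Strichartz yields $\|u\|_p \le C\, \||\nabla_g u|\|_p$, while \eqref{multiplicative} bounds $\||\nabla_g u|\|_p \le C_p \|u\|_p^{1/2} \|\Delta_g u\|_p^{1/2}$. Substituting the second estimate into the first and dividing by $\|u\|_p^{1/2}$ (which is harmless when $u\not\equiv 0$) yields $\|u\|_p \le C' \|\Delta_g u\|_p$, which is the desired inequality for $k=2$.

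For general $k \ge 2$, I would then proceed by induction, using the fact that $u \in C_c^\infty(M)$ implies $\Delta_g^j u \in C_c^\infty(M)$ for every $j \ge 0$, so all intermediate applications of Theorem \ref{mckean} and \eqref{multiplicative} are legitimate. When $k = 2m$ is even, applying the $k=2$ estimate successively to $u, \Delta_g u, \dots, \Delta_g^{m-1} u$ gives $\|u\|_p \le C \|\Delta_g u\|_p \le \cdots \le C^m \|\Delta_g^m u\|_p = C^m \|\nabla_g^k u\|_p$. When $k = 2m+1$ is odd, I would first apply the even case with exponent $2m$ to obtain $\|u\|_p \le C \|\Delta_g^m u\|_p$, then invoke Theorem \ref{mckean} for $v := \Delta_g^m u$ to bound $\|\Delta_g^m u\|_p \le C' \||\nabla_g \Delta_g^m u|\|_p = C' \|\nabla_g^k u\|_p$.

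There is no serious obstacle: the result is essentially a routine iteration of the two preceding theorems. The main things to check at each step are that the hypotheses (completeness, curvature bounds, positive spectral gap, compact support) are preserved throughout and that $C_c^\infty(M)$ is invariant under $\Delta_g$ --- both of which are immediate since the geometric assumptions bear only on $(M,g)$ itself and $\Delta_g$ maps smooth compactly supported functions to smooth compactly supported functions.
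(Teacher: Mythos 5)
Your proposal is correct and matches the paper's intended argument: the paper simply states that the result follows by "combining the above two theorems and a recursive application," and your write-up supplies exactly that recursion, including the correct observation that Theorem \ref{mckean} with $p=2$ furnishes the positive spectral gap required to invoke \eqref{multiplicative}.
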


\section{Green's function} One of the crucial tools which we will be using to prove our results is the information on the Green function of the Laplace operator. In this section, following the approach due to Li and Tam \cite{LT}, we will construct a Green function on a Hadamard manifold and show that it can be bounded by terms depending only on the curvature bounds; we will also establish sharp integral estimates for this Green function and its gradient. First, let us recall the definition of entire Green's function. 
\subsection{Green's Function: Definition and Model cases.} In this subsection we define the notion of entire Green's function and recall the Green  function of the model cases.
\begin{defi}
Let $(M,g)$ be a Riemannian manifold, then an entire Green's function of the Laplace Beltrami operator $-\Delta_g$ is a function $G:M\times M \setminus \{(x,x) : x \in M \} \rightarrow [0,\infty) $ satisfying 
\begin{itemize}
\item[(i)] For each fixed $x\in M,\;\Delta_g G^x(y)=0$ for all $y\in M\setminus\{x\}$, where $G^x$ is the function $y\rightarrow G(x,y)$.
\item[(ii)] $G(x,y)= G(y,x)$ for all $x\not=y$.
\item[(iii)]For each fixed $x\in M$,
$$G^x(y)\; =\; \begin{cases} \frac{d_g(x,y)^{2-n}}{(n-2)\omega_{n-1}}\left[1+o(1)\right] \ \ \ \ \ \ \ \ \ {\text if} \ \ n \ge3, \\
\frac{-\log d_g(x,y)}{2\pi} \left[1+o(1)\right] \ \ \ \ \ \ \ \ {\text if} \ \ n=2.
\end{cases}
 $$    
\end{itemize}

\end{defi}
Let $\Phi:(0,\infty)\rightarrow (0,\infty)$ be defined by 
\begin{equation}\label{Euc-funda}
\Phi(r)=\frac{r^{2-n}}{(n-2)\omega_{n-1}},
\end{equation}
then we know that an entire Green's function of $-\Delta$ in the Euclidean space $\R^n,n\ge 3$, is given by $G(x,y)=\Phi(|x-y|)$. Similarly, for $a>0$, if $\Psi_a:(0,\infty)\rightarrow (0,\infty)$  is defined by 
\begin{equation}\label{Hyp-funda}
\Psi_a(r)= \frac{a^{n-2}}{\omega_{n-1}}\int\limits_{ar}^\infty (\sinh t)^{1-n}\;dt,
\end{equation}
 then one can easily see that an entire Green's function of the hyperbolic space $\mathbb{H}^n_{-a^2}$ is given by $ G(x,y)\;=\; \Psi_a(d_{g_a}(x,y))$, where $d_{g_a}$ is the Riemannian distance in $\mathbb{H}^n_{-a^2}$.
 \subsection{Green's function: Existence and Estimates. } In the case of general Hadamard manifolds, we have the following theorem which establishes the existence of entire Green's function:
\begin{thm}\label{greens} Let $(M,g)$ be a Hadamard manifold of dimension $n\ge 3$, then $(M,g)$ admits an entire Green's function $G$ satisfying the estimate
\begin{equation}\label{eucli-estimate}
0\; < \; G(x,y) \le  \Phi(d_g(x,y)),
\end{equation}
where $\Phi$ is as in \eqref{Euc-funda}. Moreover, if $(M,g)$ satisfies:\\ 
(i)$\;\;K_{g} \le -a^2<0$, then  
\begin{equation}\label{hyp-estimate}
 0\; < \; G(x,y) \le \Psi_a(d_g(x,y)),
\end{equation}
(ii)$\;\; Ric_g \geq -(n-1)b^2\;,b>0$, then 
\begin{equation}\label{ricci-estimate}
0< \Psi_b(d_g(x,y)) \le G(x,y),
\end{equation}
where $\Psi_a$ and $\Psi_b$ are as in \eqref{Hyp-funda}.
\end{thm}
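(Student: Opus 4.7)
The plan is to follow the classical exhaustion scheme of Li and Tam \cite{LT}. Fix $x \in M$, set $\Omega_k := B(x, k)$, and let $G_k(x, \cdot)$ be the Dirichlet Green's function on $\Omega_k$, produced by standard elliptic theory: smooth and positive on $\Omega_k \setminus \{x\}$, symmetric, vanishing on $\partial \Omega_k$, and displaying the prescribed singular behavior at $x$. Applying the maximum principle to $G_{k+1} - G_k$ on $\Omega_k \setminus \{x\}$ (harmonic off $x$, nonnegative on $\partial \Omega_k$, with matching singular parts at $x$ so the singularity is removable) yields the monotonicity $G_k \le G_{k+1}$. Once a uniform upper bound is in hand, the limit $G := \lim_k G_k$ will be the entire Green's function.

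Both upper bounds come from comparing $G_k$ with the model radial functions. For $r(y) := d_g(x, y)$ and any smooth radial $F$, one has $\Delta_g F(r) = F''(r) + F'(r)\, \Delta_g r$, and a direct computation yields
\[
\Delta_g \Phi(r) = \frac{r^{-n}}{\omega_{n-1}}\bigl[(n-1) - r\, \Delta_g r\bigr], \quad \Delta_g \Psi_a(r) = \frac{a^{n-1}}{\omega_{n-1}\sinh^{n-1}(ar)}\bigl[(n-1) a \coth(ar) - \Delta_g r\bigr].
\]
Under $K_g \le 0$, Theorem~\ref{Hessian} gives $\Delta_g r \ge (n-1)/r$, so $\Delta_g \Phi(r) \le 0$; under $K_g \le -a^2$, the same theorem gives $\Delta_g r \ge (n-1) a \coth(ar)$, so $\Delta_g \Psi_a(r) \le 0$. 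In each case the difference $\Phi(r) - G_k$ (respectively $\Psi_a(r) - G_k$) is superharmonic on $\Omega_k \setminus \{x\}$, nonnegative on $\partial \Omega_k$, and bounded near $x$ since the singular parts cancel; the minimum principle yields the stated bound at each scale $k$, hence in the limit.

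For the lower bound \eqref{ricci-estimate} I would use the reverse comparison. Under $Ric_g \ge -(n-1) b^2$, the Laplacian comparison Theorem~\ref{Laplace} gives $\Delta_g r \le (n-1) b \coth(br)$, so the formula above shows $\Delta_g \Psi_b(r) \ge 0$; that is, $\Psi_b(r)$ is \emph{subharmonic} on $M \setminus \{x\}$. Set $v(y) := G(x, y) - \Psi_b(r(y))$, which is then superharmonic on $M \setminus \{x\}$. Since $\Psi_b(r) = \Phi(r) + O(1)$ as $r \to 0$ and, by the standard parametrix expansion for smooth elliptic operators, $G(x, y) = \Phi(d_g(x,y)) + O(1)$ as $y \to x$, the function $v$ is bounded near $x$ and extends there to a bounded superharmonic function on all of $M$. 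Moreover the Euclidean upper bound forces $G(x, \cdot) \to 0$ at infinity, and so does $\Psi_b$, hence $v \to 0$ at infinity. Applying the minimum principle on $B(x, R)$ and letting $R \to \infty$ forces $v \ge 0$ on $M$, which is \eqref{ricci-estimate}.

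The main obstacle is this last step: unlike for the upper bounds, the comparison function $\Psi_b$ does not vanish on $\partial \Omega_k$, so the argument cannot be run at finite scale. It must instead rely on the decay of $G$ at infinity (itself a consequence of the Euclidean upper bound and thus of $n \ge 3$) together with a removable-singularity argument for bounded superharmonic functions at $x$; it is here that the smoothness of $g$ and the boundedness of its sectional curvature enter crucially, whereas no control on higher derivatives of the curvature is needed.
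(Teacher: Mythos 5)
Your upper bound arguments (exhaustion, monotonicity, radial comparison via Theorem~\ref{Hessian}) follow the paper's proof closely. Two small caveats: to get $G(x,y)=G(y,x)$ in the limit you should exhaust by $B(O,k)$ for a fixed base point $O$, not by $B(x,k)$; and removability of the singularity of $G_{k+1}-G_k$ at $x$ is not automatic from the shared leading singularity alone, which is why the paper compares $(1+\epsilon)G_{R_2}$ with $G_{R_1}$ and lets $\epsilon\to 0$.

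The lower bound, however, has a genuine gap. You assume that $v:=G(x,\cdot)-\Psi_b(d_g(x,\cdot))$ is bounded near the pole, so it extends to a superharmonic function across $x$ and the minimum principle on large balls applies. This boundedness is false for $n\ge 5$ (and only logarithmically unbounded for $n=4$). Expanding $(\sinh t)^{1-n}=t^{1-n}\bigl(1-\tfrac{n-1}{6}t^2+\cdots\bigr)$ in the integral defining $\Psi_b$ gives
\[
\Psi_b(r)=\Phi(r)+\frac{(n-1)b^2}{6(4-n)\,\omega_{n-1}}\,r^{4-n}+O(1)\qquad(n\ne 4),
\]
which blows up as $r\to 0$ when $n\ge 5$, while the parametrix expansion of $G$ contributes a curvature-dependent term of the same order $d_g(x,\cdot)^{4-n}$; these subleading singularities do not cancel in general, so $v$ need not extend across $x$. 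The paper avoids this entirely by never removing the singularity: it works on $M\setminus B(x,\delta)$ with the \emph{renormalized} function $h^{x,\delta}:=m_\delta\,G^x-\Psi_b(d_g(x,\cdot))$, where $m_\delta=\Psi_b(\delta)\big/\min_{d_g(x,\cdot)=\delta}G^x$ is chosen so that $h^{x,\delta}\ge 0$ on $\partial B(x,\delta)$. Since both $G^x$ and $\Psi_b$ tend to $0$ at infinity (the former via the Euclidean upper bound \eqref{eucli-estimate}), the minimum principle on $M\setminus B(x,\delta)$ gives $h^{x,\delta}\ge 0$, and $m_\delta\to 1$ as $\delta\to 0$ (because $G^x$ and $\Psi_b$ share the leading singularity $\Phi$) then yields \eqref{ricci-estimate}. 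The decay-at-infinity idea you invoke is the right one; what is missing is the $m_\delta$ normalization that sidesteps the removability problem you cannot actually prove.
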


We need the entire Green function for the following representation formula:

\begin{rem} We will observe from the proof that the Green function $G$ established in the previous theorem satisfies for every $u\in C^2_c(M)$ 
\begin{equation}\label{repre-1}
u(x)\;=\;\int\limits_M G(x,y)(-\Delta_gu(y))\;d\mu_g (y)  
\end{equation}
and 
\begin{equation}\label{repre-2}
u(x)\;=\; -\Delta_g\left(\int\limits_M G(x,y)u(y)\;d\mu_g (y)\right) 
\end{equation}

\end{rem}
The next theorem gives us precise asymptotic bounds of $G$ and its gradient near the singularity. These bounds will be crucial to prove Adams inequalities for the best exponents.
\begin{thm} \label{greens-sing-estimate} Let $(M,g)$ be a Hadamard manifold satisfying $Ric_g \geq -(n-1)b^2$ for some $b>0 $. Let $G$ be the entire Green function established in Theorem \ref{greens}, then for every $R>0$ there exist positive constants $A, B$ depending only on $R$ such that
\begin{equation}\label{upper-lower-bound}
\Phi(d_g(x,\cdot)) \left[1-A\, [d_g(x,\cdot)]^2\right] \le G(x,\cdot) \le  \Phi(d_g(x,\cdot)),
\end{equation}
and
\begin{equation}\label{gradient-singularity}
|\nabla_g G(x,\cdot) \ - \nabla_g \Phi(d_g(x,\cdot))| \le  \left\{\begin{array}{ll}
B[d_g(x,\cdot)]^{3-n} & {\rm if} \ n>3,\\
B(1+|\log d_g(x,\cdot)|) & {\rm if} \ n=3,\\
\end{array}\right.
\end{equation}
holds in $B(x,R)$, uniformly for all $x \in M$ .
 \end{thm}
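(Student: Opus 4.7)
The strategy for both estimates is to compare $G(x,\cdot)$ with the Euclidean-type radial model $\Phi(d_g(x,\cdot))$ in a small ball $B(x,R)$, leveraging the comparison theorems of Section 2 to control the discrepancy. The upper bound in (\ref{upper-lower-bound}) is already part of Theorem \ref{greens}, so only the lower bound and the gradient estimate require work.

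For the lower bound, set $r := d_g(x,\cdot)$ and $H := \Phi(r) - G(x,\cdot) \geq 0$ on $B(x,R) \setminus \{x\}$. Using the identity $\Phi''(r) + (n-1)\Phi'(r)/r = 0$ one computes $\Delta_g H = -\Phi'(r)(\Delta_g r - (n-1)/r)$, which is non-negative because $\Phi'(r) < 0$ and $\Delta_g r \geq (n-1)/r$ on any Hadamard manifold (Theorem \ref{Hessian} with $a = 0$). The Laplacian comparison theorem (Theorem \ref{Laplace}), combined with the expansion $b\coth(br) - 1/r = O(r)$ as $r \to 0$, yields
\[
0 \leq \Delta_g r - \frac{n-1}{r} \leq C(R,b)\, r \quad \text{uniformly on } B(x,R) \setminus \{x\},
\]
and hence $0 \leq \Delta_g H \leq C'\, r^{2-n}$. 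The plan is then to construct an explicit radial barrier $\psi$ with $\Delta_g \psi \leq -C'\, r^{2-n}$ and $\psi(r)$ of size $r^{2}\,\Phi(r)$ near the origin (with dimension-dependent logarithmic modifications when $n = 3, 4$), and to apply the maximum principle on the punctured ball. The singular point $x$ poses no obstruction because the crude bound $G \geq \Psi_b(r)$ from Theorem \ref{greens}(ii) keeps $H$ bounded there, so a removable-singularity form of the comparison principle applies, producing $H \leq A\, r^{2}\,\Phi(r)$.

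For the gradient estimate, the function $F := G(x,\cdot) - \Phi(r)$ satisfies $\Delta_g F = -\Delta_g \Phi(r)$ on $B(x,R) \setminus \{x\}$ with right-hand side of size $C\, r^{2-n}$, while its pointwise magnitude is controlled by the lower-bound step above. On each annular shell $B(x, 2\rho) \setminus B(x, \rho/2)$ with $\rho \leq R/2$, rescaling by $\rho$ reduces the problem to a uniformly elliptic equation on a fixed-geometry annulus, on which standard Schauder or $W^{2,p}$ interior gradient estimates yield the claimed bound $|\nabla_g F| \leq B\, \rho^{3-n}$ (with an additional logarithmic factor when $n = 3$). Uniformity in $x$ of the constants $A$ and $B$ is automatic because Theorems \ref{volumeelement}, \ref{Hessian} and \ref{Laplace} give $x$-independent control on the metric coefficients in normal coordinates around any point of $M$.

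The main obstacle is the construction of the radial barrier $\psi$ for the lower bound, especially in the borderline dimensions $n = 3, 4$, where naive power-function barriers are insufficient and logarithmic terms must be incorporated; justifying the removable-singularity form of the maximum principle at $x$ via the a priori bound of Theorem \ref{greens}(ii) is a related non-trivial point. For the gradient estimate, the delicate step is preserving uniform ellipticity constants across the rescaled annular shells, which ultimately relies on the uniform-in-$x$ geometric control furnished by the curvature comparison theorems of Section 2.
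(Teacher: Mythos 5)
Your barrier/maximum-principle argument for the lower bound in \eqref{upper-lower-bound} is much heavier than what the paper does: the paper simply observes that \eqref{ricci-estimate} gives $G \ge \Psi_b$ and reads off the claimed expansion from the explicit comparison of $\Psi_b$ with $\Phi$ near $r=0$. You also have a sign error: with the paper's convention $\Delta_g = +\mathrm{Tr}\,\mathrm{Hess}$, one has $\Delta_g H = \Delta_g\Phi(r) = \Phi'(r)\bigl(\Delta_g r - \tfrac{n-1}{r}\bigr) \le 0$ (no leading minus sign), so $H$ is superharmonic rather than subharmonic, and the direction of the barrier comparison has to be reoriented. (The logarithmic issues you flag for $n=3,4$ are real; the clean $r^2$ rate only emerges for $n>4$, and this is already visible in the direct computation of $\Phi(r)-\Psi_b(r)$.)

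For the gradient estimate \eqref{gradient-singularity}, the approaches diverge substantially, and your plan has a genuine gap. The paper writes $-\Delta_g \Phi(d_g(x,\cdot)) = \delta_x + H(x,\cdot)$, introduces $U(x,y) = \int_M G(y,z)\,f(d_g(x,z))\,H(x,z)\,d\mu_g(z)$ so that $h^x := G(x,\cdot) - \Phi(d_g(x,\cdot)) + U(x,\cdot)$ is harmonic in $B(x,R)$ and uniformly bounded on $\partial B(x,R)$, and then controls $\nabla_g h^x$ via the Cheng--Yau-type gradient estimate \eqref{gradient-point-est} from \cite{Yau,LW}, which requires only the lower Ricci bound. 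You instead propose rescaled interior Schauder/$W^{2,p}$ estimates on annuli and assert that the ellipticity constants are $x$-uniform thanks to Theorems \ref{volumeelement}, \ref{Hessian} and \ref{Laplace}. Those comparison theorems control only the Laplacian of the distance function and the volume element; they do not give uniform $C^\alpha$ (or even uniform two-sided $C^0$) control on the metric tensor and its derivatives in normal coordinates under $Ric_g \ge -(n-1)b^2$ and $K_g\le 0$. Without that, the interior elliptic regularity estimate with $x$-independent constants is not available, and this is precisely the obstruction the paper avoids by routing the argument through the gradient estimate for positive harmonic functions rather than through coefficient regularity.
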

In addition to the above pointwise estimates, we also need  estimates on the $L^2$ and $L^1$ norms of $G$ and its gradient:
\begin{thm}\label{greens-estimate} Let $(M,g)$ be a Hadamard manifold satisfying $ K_g \leq -a^2$ and $Ric_g \geq -(n-1)b^2$ for some $a>0,b>0 $. Let $G$ be the entire Green function established in Theorem \ref{greens}, then  there exists a $C>0$ such that for every $x\in M$ and every $R>0$,
\begin{equation}\label{local-estimate}
\int\limits_{B(x,R)} G(x,\cdot)\;d\mu_g \le C(1+R),
\end{equation}
\begin{equation}\label{local-gradient-estimate}
\int\limits_{B(x,R)} |\nabla_gG(x,\cdot)|\;d\mu_g \le C(1+R),
\end{equation}
\begin{equation}\label{infinity-estimate}
\int\limits_{M\setminus B(x,R)} G^2(x,\cdot)\;d\mu_g \le C \Psi_a(R),
\end{equation}
and
\begin{equation}\label{gradient-estimate}
\int\limits_{M\setminus B(x,R)} |\nabla_g G(x,\cdot)|^2\;d\mu_g \le \frac{C}{R^2} \Psi_a(R).
\end{equation}
\end{thm}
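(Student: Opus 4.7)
I would prove the four estimates in the order \eqref{local-estimate}, \eqref{infinity-estimate}, \eqref{gradient-estimate}, \eqref{local-gradient-estimate}, each by a different technique.

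\emph{For \eqref{local-estimate}.} By the representation formula \eqref{repre-2} and the symmetry of $G$,
\[
\int_{B(x,R)} G(x,y)\,d\mu_g(y) \;=\; v_R(x),
\]
where $v_R$ is the bounded solution on $M$ of $-\Delta_g v_R = \chi_{B(x,R)}$ with $v_R\to 0$ at infinity. I would build a radial supersolution $\phi(y) = f(d_g(x,y))$, taking $f$ to be the corresponding radial solution in the model space $\mathbb H^n_{-a^2}$, i.e., $f$ solving $-f''(s)-(n-1)a\coth(as)f'(s)=\chi_{[0,R)}(s)$, $f'(0)=0$, $f(\infty)=0$. Since $f'\le 0$, the Laplacian comparison $\Delta_g d_g(x,\cdot)\ge (n-1)a\coth(a\,d_g(x,\cdot))$ (Theorem \ref{Hessian}) yields $-\Delta_g\phi\ge\chi_{B(x,R)}$ on $M\setminus\{x\}$, and the maximum principle gives $v_R(x)\le \phi(x)=f(0)$. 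A direct ODE estimate for $f(0)$ (using $(\sinh(as)/a)^{n-1}$ as integrating factor) produces the bound $f(0)\le C(1+R)$.

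\emph{For \eqref{infinity-estimate}.} Integration by parts on $M\setminus B(x,R)$, using $\Delta_g G^2 = 2|\nabla_g G|^2$ on $M\setminus\{x\}$, combined with the pointwise bound $G|_{\partial B(x,R)}\le \Psi_a(R)$ (Theorem \ref{greens}(i)) and the flux identity $\int_{\partial B(x,R)}(-\partial_r G)\,dS=1$ (obtained from $\int_M(-\Delta_g G)\,d\mu_g=1$), yields the energy bound
\[
\int_{M\setminus B(x,R)} |\nabla_g G|^2\,d\mu_g \;=\; \int_{\partial B(x,R)} G\,(-\partial_r G)\,dS \;\le\; \Psi_a(R).
\]
To pass from this to the $L^2$ bound, I would use the positive spectral gap on the exterior: Theorem \ref{mckean} with $p=2$ together with domain monotonicity gives $\lambda_1(M\setminus B(x,R))\ge ((n-1)a/2)^2$. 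Applying the corresponding Poincar\'e inequality to $G\eta$ for a Lipschitz cutoff $\eta$ with $\eta\equiv 0$ on $B(x,R/2)$ and $\eta\equiv 1$ outside $B(x,R)$, then absorbing the mixed and cutoff terms by a Young and iterative scheme using the sharp energy bound above, produces $\int_{M\setminus B(x,R)} G^2\,d\mu_g\le C\,\Psi_a(R)$.

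\emph{For \eqref{gradient-estimate} and \eqref{local-gradient-estimate}.} Estimate \eqref{gradient-estimate} is a standard Caccioppoli inequality: testing $-\Delta_g G=0$ against $\eta^2 G$ for a Lipschitz cutoff with $\eta\equiv 0$ on $B(x,R/2)$, $\eta\equiv 1$ outside $B(x,R)$, $|\nabla_g\eta|\le C/R$, and applying Cauchy--Schwarz, one obtains $\int\eta^2|\nabla_g G|^2\le 4\int |\nabla_g\eta|^2 G^2\le (C/R^2)\int_{B(x,R)\setminus B(x,R/2)} G^2$, which combined with \eqref{infinity-estimate} (applied after a minor shift of cutoff radii) gives $(C'/R^2)\Psi_a(R)$. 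For \eqref{local-gradient-estimate} I would split at $d_g(x,\cdot)=1$: near the pole, the estimate $|\nabla_g G|\le C\,d_g(x,\cdot)^{1-n}$ from Theorem \ref{greens-sing-estimate} integrates against the Bishop area bound to give $\int_{B(x,1)}|\nabla_g G|\,d\mu_g\le C$; away from the pole, Yau's gradient estimate for positive harmonic functions (applicable under $Ric_g\ge -(n-1)b^2$) yields the pointwise bound $|\nabla_g G^x(y)|\le C\,G^x(y)$ for $d_g(x,y)\ge 1$, so that \eqref{local-estimate} gives $\int_{B(x,R)\setminus B(x,1)}|\nabla_g G|\,d\mu_g\le C\int_{B(x,R)} G\,d\mu_g\le C(1+R)$.

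\emph{Main obstacle.} The hardest step is \eqref{infinity-estimate}: bounding $G^2$ pointwise by $\Psi_a^2(d_g(x,\cdot))$ and integrating against $d\mu_g\le \omega_{n-1}(\sinh(br)/b)^{n-1}\,dr\,d\sigma$ yields an integrand comparable to $e^{(n-1)(b-2a)r}$, which fails to produce $\Psi_a(R)$-decay once $b>a$. The proof must therefore use the harmonicity of $G$ on $M\setminus\{x\}$ together with the spectral gap of $M$ in an essential way, rather than pointwise decay alone.
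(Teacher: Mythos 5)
Your plan diverges from the paper on two of the four estimates. For \eqref{local-estimate} you propose a radial supersolution barrier lifted from the hyperbolic model; this works (the ODE estimate for $f(0)$ indeed gives $C(1+R)$), but the paper instead runs a layer-cake integral $\int_0^\infty\mu_g(\{G^x>t\}\cap\{G^x>\Psi_b(R)\})\,dt$ and feeds in sublevel-measure bounds \eqref{sublevel-measure}, \eqref{sublevel-infinity}, which are by-products of the same Poincar\'e computation used for \eqref{infinity-estimate}; the paper's route therefore recycles one estimate for two conclusions, whereas yours requires a separate maximum-principle argument. Your treatments of \eqref{gradient-estimate} and \eqref{local-gradient-estimate} coincide with the paper's.

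The real gap is in \eqref{infinity-estimate}. First, the step $\int_{\partial B(x,R)}G\,(-\partial_r G)\,dS\le\Psi_a(R)\int_{\partial B(x,R)}(-\partial_r G)\,dS=\Psi_a(R)$ silently assumes $-\partial_r G\ge 0$ on the geodesic sphere $\partial B(x,R)$. On a general pinched Hadamard manifold $G^x$ is not radially monotone: its level sets are not geodesic spheres centered at $x$, so $\partial_r G$ can change sign on $\partial B(x,R)$ and the factorization fails. Second, even granting the energy bound, the Caccioppoli-plus-Poincar\'e iteration you sketch cannot reproduce the sharp rate $\Psi_a(R)$: writing $I(R)=\int_{M\setminus B(x,R)}G^2$ and absorbing by Young, one gets a recursion of the form $I(R+1)\le\alpha\,I(R)+C\,\Psi_a(R)$ with $\alpha\in(0,1)$ depending on $\lambda_1(M)$ and the Lipschitz constant of the cutoff, and the resulting geometric sum decays like $\Psi_a(R)$ only if $\alpha\,e^{(n-1)a}<1$; the optimal $\alpha$ obtainable this way is bounded below by an $a$-independent constant, so for $a$ large the scheme only yields some exponential rate, not $e^{-(n-1)aR}$. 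The paper sidesteps both obstructions by switching from geodesic balls to level sets of $G$: it applies McKean's inequality directly to the compactly supported truncation $H_R^t=\min\{t,G_R^x\}$, computes the exact identity $\int_{B_R}|\nabla H_R^t|^2\,d\mu_g=t$ by Green's formula on $\{G_R^x>t\}$ (where $\partial_\nu G_R^x$ has a definite sign, the boundary being a level set of $G_R^x$ itself), and so obtains $t^2\mu_g(\{G^x\ge t\})+\int_{\{G^x<t\}}(G^x)^2\,d\mu_g\le C\,t$ in a single stroke, with the passage back to balls handled by the inclusion \eqref{comp-ball}. You would need to replace the sphere-based energy argument by this level-set computation, or supply a separate proof that the loss in the iteration can be made smaller than $e^{-(n-1)a}$.
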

\medskip

\subsection{Proofs of Theorems.} We need a few lemmas before going into the proofs of Theorem \ref{greens}, Theorem \ref{greens-sing-estimate}, and Theorem \ref{greens-estimate}. First,
let us recall the theorem concerning the existence of Green's function for the Laplace operator with Dirichlet boundary condition in bounded domains. For details we refer to \cite{Aubin}.
\begin{lem}\label{dirichlet-greens} Let $(M,g)$ be a Riemannian manifold of dimension $n\ge 3$, and $\Omega$ be a bounded open subset of $M$ with smooth boundary, then there exists  $G : \overline{\Omega}\times\overline{\Omega} \setminus \{(x,x): x\in \overline{\Omega}\}\rightarrow [0,\infty)$ such that
\begin{enumerate}
\item[(i)] $\;\;G(x,y)=G(y,x)\; ,\; \forall x\not= y, $
\item[(ii)] $\;\;G(x,y)=0 \; ,\; {\rm if}\; x\in \partial\Omega $ or $y\in \partial\Omega,$
\item[(iii)]$\;\;-\Delta_g G(x,\cdot) = \delta_x, \; -\Delta_g G(\cdot,y) = \delta_y $,
\item[(iv)]For each $x$ fixed, $\;\;G(x,y) = \frac{[d_g(x,y)]^{2-n}}{(n-2)\omega_{n-1}}\left[1+o(1)\right] $ as $y\rightarrow x.$
\end{enumerate}
\end{lem}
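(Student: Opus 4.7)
The plan is to follow the classical parametrix method. First I would construct an approximate Green function $H(x,y)$ with the correct singular behaviour on the diagonal; then I would correct it by adding a regular function that enforces the Dirichlet boundary condition. Symmetry and positivity are then obtained from Green's identity and the maximum principle.

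For the construction of the parametrix, fix $x\in \Omega$ and work in geodesic normal coordinates centred at $x$, where $d_g(x,y)=|y|$ and $g_{ij}(y)=\delta_{ij}+O(|y|^2)$. A direct computation shows that, as distributions in a small geodesic ball $B(x,\delta)\subset \Omega$,
\[
-\Delta_g \Phi(d_g(x,\cdot)) \;=\; \delta_x + r(x,\cdot),
\]
where $\Phi$ is the Euclidean fundamental solution from \eqref{Euc-funda} and $r(x,\cdot)$ has singularity of order at most $d_g(x,\cdot)^{2-n}$, strictly milder than that of $\Phi$ itself. Multiplying by a smooth cut-off supported in $B(x,\delta)$ yields a parametrix $H(x,\cdot)$ defined on $\overline{\Omega}$, smooth up to $\partial\Omega$, satisfying $-\Delta_{g,y}H(x,\cdot)=\delta_x+K(x,\cdot)$ with $K(x,\cdot)\in L^p(\Omega)$ for some $p>1$. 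By iterating the Hadamard construction (solving the transport equations for the successive coefficients along radial geodesics), one can make $K$ as regular as required.

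For the correction step, let $h_x:=H(x,\cdot)|_{\partial\Omega}$, which is smooth since $H(x,\cdot)$ is smooth away from $x$ and $x$ is at positive distance from $\partial\Omega$. By standard elliptic theory on the smooth bounded domain $\Omega$ there is a unique solution $v_x$ to
\[
-\Delta_g v_x \;=\; -K(x,\cdot) \text{ in } \Omega, \qquad v_x \;=\; -h_x \text{ on } \partial\Omega,
\]
lying in $W^{2,p}(\Omega)\cap C(\overline{\Omega})$ (using enough iterations of Hadamard to ensure $p>n/2$ if needed). Setting $G(x,y):=H(x,y)+v_x(y)$ yields a function vanishing on $\partial\Omega$ and solving $-\Delta_{g,y}G(x,\cdot)=\delta_x$ distributionally, which gives (ii) and (iii). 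Property (iv) follows because near $x$ we have $H(x,\cdot)=\Phi(d_g(x,\cdot))$ (by the cut-off) and $v_x$ is locally bounded.

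For symmetry, apply Green's second identity to $G(x_1,\cdot)$ and $G(x_2,\cdot)$ on $\Omega\setminus(B(x_1,\varepsilon)\cup B(x_2,\varepsilon))$; the boundary terms on $\partial\Omega$ vanish since both functions are zero there, and as $\varepsilon\to 0$ the integrals on the two small spheres converge respectively to $G(x_1,x_2)$ and $G(x_2,x_1)$ by the asymptotic (iv), establishing $G(x_1,x_2)=G(x_2,x_1)$. For positivity, $G(x,\cdot)$ is harmonic on $\Omega\setminus\{x\}$, vanishes on $\partial\Omega$, and tends to $+\infty$ at $x$, so the minimum principle forces $G(x,y)>0$ for $y\neq x$. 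The main obstacle is producing a parametrix whose remainder is regular enough for the $W^{2,p}$ correction to yield the clean pointwise asymptotic (iv) uniformly near the diagonal; once this is settled via sufficient iterations of the Hadamard construction, the remainder of the argument is routine elliptic theory.
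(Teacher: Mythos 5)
The paper does not prove this lemma; it simply cites Aubin's monograph for the result. Your proof is the standard parametrix construction found there (build a local approximate fundamental solution, iterate the Hadamard construction until the remainder is regular enough, solve the Dirichlet problem for the correction, then derive symmetry from Green's second identity and positivity from the maximum principle), so it is essentially the same approach the paper is referencing, and it is correct.
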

We are going to get our Green function as the limit of Dirichlet Green's functions in bounded domains. The following lemma plays a crucial role in getting the bounds on the Green function.
\begin{lem}\label{comp-euclidean} Let $(M,g)$ be a Hadamard manifold of dimension $n\ge 3$, and $\Phi,\Psi_a$ be as in \eqref{Euc-funda} and \eqref{Hyp-funda}.  For $x\in M$, define $\Phi^x,\Psi^x_a : M\setminus \{x\}\rightarrow (0,\infty)$ by 
$ \Phi^x (y)\;=\; \Phi(d_g(x,y))\;$ and $\Psi^x_a (y)\;=\: \Psi_a(d_g(x,y))\;$.
Then:\begin{enumerate}
\item[(i)]$-\Delta_g\Phi^x(y) \ge 0 \;\; {\rm for \;all\;} y\in M \setminus\{x\}. $
\item[(ii)]$-\Delta_g\Psi_a^x(y) \ge 0 \;\; {\rm for \;all\;} y\in M \setminus\{x\} $ if $K_{g} \le -a^2<0$.
\item[(iii)]$-\Delta_g\Psi_b^x(y) \le 0 \;\; {\rm for \;all\;} y\in M \setminus\{x\} $ if $Ric_{g} \geq -(n-1)b^2$.
\end{enumerate}
\end{lem}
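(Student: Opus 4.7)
The plan rests on the standard formula for the Laplacian of a radial function on a Hadamard manifold. Since $(M,g)$ is Hadamard, the distance function $r(y) := d_g(x,y)$ has no cut locus and is smooth on $M\setminus\{x\}$ with $|\nabla_g r|=1$. Consequently, for any $C^2$ function $f:(0,\infty)\to\R$, the chain rule gives
\begin{equation*}
\Delta_g(f\circ r)(y) \;=\; f''(r(y)) \;+\; f'(r(y))\,\Delta_g r(y).
\end{equation*}
I would apply this to each of $f=\Phi$, $f=\Psi_a$, $f=\Psi_b$. A direct differentiation shows that in every case $f'(r)<0$: $\Phi'(r)=-\frac{r^{1-n}}{\omega_{n-1}}$, $\Psi_a'(r)=-\frac{a^{n-1}}{\omega_{n-1}}(\sinh(ar))^{1-n}$, and similarly for $\Psi_b$. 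This sign information is the key, since it reverses the inequalities obtained from curvature comparison.

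For assertion (i), I would combine the above with Theorem \ref{Hessian} in the case $a=0$, which yields $\Delta_g r \ge (n-1)/r$ on any Hadamard manifold. Because $\Phi$ is by construction the Euclidean fundamental solution, it satisfies $\Phi''(r)+\frac{n-1}{r}\Phi'(r)=0$. Since $\Phi'(r)<0$, replacing $\frac{n-1}{r}$ by the larger quantity $\Delta_g r$ can only decrease the sum, hence
\begin{equation*}
\Delta_g\Phi^x \;=\; \Phi''(r) + \Phi'(r)\,\Delta_g r \;\le\; \Phi''(r) + \frac{n-1}{r}\Phi'(r) \;=\; 0.
\end{equation*}
For assertion (ii), the argument is identical with the model curvature $-a^2$ in place of $0$. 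By Theorem \ref{Hessian} the pinching $K_g\le -a^2$ gives $\Delta_g r \ge (n-1)a\coth(ar)$, while a direct check from the definition of $\Psi_a$ shows $\Psi_a''(r)+(n-1)a\coth(ar)\,\Psi_a'(r)=0$ on $(0,\infty)$. Using once more $\Psi_a'<0$, the same monotonicity chain delivers $-\Delta_g\Psi_a^x\ge 0$.

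For assertion (iii) I reverse the comparison: Theorem \ref{Laplace}, applied under $Ric_g\ge-(n-1)b^2$, furnishes the upper bound $\Delta_g r \le (n-1)b\coth(br)$. Since the model ODE for $\Psi_b$ reads $\Psi_b''(r)+(n-1)b\coth(br)\,\Psi_b'(r)=0$ and $\Psi_b'(r)<0$, substituting a smaller coefficient in front of $\Psi_b'$ increases the sum, so
\begin{equation*}
\Delta_g\Psi_b^x \;=\; \Psi_b''(r) + \Psi_b'(r)\,\Delta_g r \;\ge\; \Psi_b''(r) + (n-1)b\coth(br)\,\Psi_b'(r) \;=\; 0,
\end{equation*}
giving $-\Delta_g\Psi_b^x\le 0$. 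The proof involves no real obstacle beyond bookkeeping; the one step that requires a little care is verifying that $\Psi_a$ (and $\Psi_b$) indeed solves the claimed ODE, which follows from differentiating the integral representation \eqref{Hyp-funda} and matching with the radial Laplacian of the hyperbolic space $\mathbb{H}^n_{-a^2}$, where $\Psi_a$ is the known fundamental solution. Everything else is a consistent exploitation of the sign of $f'$ together with the appropriate comparison (Hessian for the upper bounds, Laplacian for the lower bound).
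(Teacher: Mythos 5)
Your proof is correct and follows the same approach as the paper: the radial-Laplacian chain rule $\Delta_g(f\circ r)=f''(r)+f'(r)\Delta_g r$, the ODEs satisfied by $\Phi$, $\Psi_a$, $\Psi_b$ in their respective model spaces, and the Hessian/Laplacian comparison theorems combined with the fact that $f'<0$. You are in fact slightly more careful than the paper, which writes $\Phi''(r)=\frac{n-1}{r}\Phi'(r)$ (a sign typo for $\Phi''(r)=-\frac{n-1}{r}\Phi'(r)$); your version of the ODE and the resulting sign bookkeeping are correct.
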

\begin{proof} 
Let us recall that, given a $C^2$ function $f: (0,+\infty) \rightarrow (0,+\infty)$, $$ \Delta_g( f(d_g(x,\cdot)) = f'(d_g(x,\cdot)) \Delta_g d_g(x,\cdot) + f''(d_g(x,\cdot))$$ (we use $|\nabla_g d_g(x,\cdot)|=1$ on $M \setminus \{x\}$). Note that $\Phi''(r) = \frac{n-1}{r} \Phi'(r)$; a similar formula holds for $\Psi_a$. The conclusions (i) and (ii) then follow from Theorem \ref{Hessian}  while (iii) follows from Theorem \ref{Laplace}.
\end{proof}
\noindent
{\bf Proof of Theorem \ref{greens}.} For $x \in M$ and $R>0$, we denote by $B(x,R)$ the open Riemannian ball of radius $R$ centered at $x$. Fix a point $O \in M$ and define for $R>0$, $B_R := B(O,R)$.  
Let $G_R$ denote the unique Dirichlet Green function of $B_R$ given by Lemma \ref{dirichlet-greens}; we will show that the limit of $G_R$ as $R\rightarrow \infty$ exists and is the required Green function. We will present the arguments in several steps.\\\\
{\it Step 1:} Let $0<R_1<R_2 <\infty$ and $x \neq y\in B_{R_1}$, then $G_{R_1}(x,y)\le G_{R_2}(x,y).$\\
{\it Proof of Step 1.} Fix $x \in B_{R_1}$, $\epsilon >0$, and consider the function $g_\epsilon : B_{R_1}\setminus \{x\} \rightarrow \R $ defined by
$$g_\epsilon(y) = (1+\epsilon)G_{R_2}(x,y) - G_{R_1}(x,y). $$
Then for any small $\delta >0$, $g_\epsilon$ is harmonic in $B_{R_1}\setminus \overline{B(x,\delta)}$, and $g_\epsilon \ge 0$ on $\partial(B_{R_1}\setminus \overline{B(x,\delta)} )$ thanks to (iv) of Lemma \ref{dirichlet-greens}.
Thus, by maximum principle $g_\epsilon \ge 0$ in $B_{R_1}\setminus \overline{B_{\delta}} $ for $\delta$ small enough, and hence in $B_{R_1}\setminus \{x\}.$ Now, Step 1 follows by taking $\epsilon \rightarrow 0.$\\\\
{\it Step 2:}  For every $R>0$, $G_{R}(x,y) \le \Phi^x(y)$ for all $x,y\in B_R,$ where $\Phi^x$ is defined as in Lemma \ref{comp-euclidean}.\\
{\it Proof of Step 2.} Fix $x\in B_R$ and $\delta >0$ small enough, and consider the function $g^{x,\delta} : B_R\setminus B(x,\delta)\rightarrow \R$ defined by
$$g^{x,\delta}(y) = \Phi^x(y) - m_\delta G_{R}(x,y), $$ 
where $m_\delta = \frac{\Phi(\delta)}{\max\{G_R(x,y): d_g(x,y)=\delta\}}$.
Then, it follows from the maximum principle that $g^{x,\delta}(y)\ge 0$ in $B_R\setminus B(x,\delta)$ . Note that $m_\delta \rightarrow 1$ as $\delta \rightarrow 0.$ Thus, Step 2 follows by taking $\delta \rightarrow 0$ in $g^{x,\delta}(y)\ge 0$ for $y\in B_R\setminus B(x,\delta)$.\\\\
{\it Step 3:} Define for $x,y\in M,\; x\not=y, G(x,y)= \lim\limits_{R\rightarrow \infty}G_R(x,y)$, then $G$ is the required Green function.\\
 {\it Proof of Step 3.} First, observe that $G$ is well-defined thanks to Step 1 and Step 2. The estimate \eqref{eucli-estimate} on $G$ follows from Step 2 by taking the limit $R \rightarrow \infty.$ Also, $G(x,y)=G(y,x)$ as it holds for each $G_R.$ 
 For any $x\in M$, the function $\Phi_x \in L^1_{loc}(M)$ and $G_R \le \Phi_x.$
 Thus $\Delta_gG_R(x,.)\rightarrow \Delta_gG(x,.)$ in the sense of distributions, which implies $-\Delta_gG(x,.) = \delta_x$ for all $x\in M$, in particular $\Delta_gG^x =0$ in $M\setminus \{x\}.$ \\ It remains to show that $G$ satisfies the last condition of the definition of entire Green's function.
 Fix $x\in M$ and $R>0$ such that $x\in B_{\frac{R}{2}}$, then as $y\rightarrow x$, we have
$$ \frac{[d_g(x,y)]^{2-n}}{(n-2)\omega_{n-1}}\left[1+o(1)\right]= G_R(x,y)\le G(x,y) \le \frac{[d_g(x,y)]^{2-n}}{(n-2)\omega_{n-1}}$$
and hence $G$ satisfies (iii) of the definition.\\\\
When $(M,g)$ satisfies $K_g \le -a^2<0$, we can repeat Steps 2 and 3 with $\Psi_a$ instead of $\Phi$ to establish \eqref{hyp-estimate}.\\\\
To prove \eqref{ricci-estimate}, fix $x\in M$. For $\delta>0$ define $h^{x,\delta}$ by
$$h^{x,\delta}(y) \;=\; m_\delta G^x(y) - \Psi_b(d_g(x,y))\;,\; y\in M\setminus B(x,\delta),$$
where $m_\delta = \frac{\Psi_b(\delta)}{\min\limits_{\{y: d_g(x,y)=\delta\}}G^x(y)}$. Then, using (iii) of Lemma \ref{comp-euclidean} we get $-\Delta_g h^{x,\delta} \ge 0$, and hence using the maximum principle $h^{x,\delta}\ge 0$ in $ M\setminus B(x,\delta)$. Taking the limit as $\delta \rightarrow 0$, and observing that $m_{\delta}\rightarrow 1$, we get $G^x(y) - \Psi_b(d_g(x,y))\ge 0$ for $y\in M\setminus \{x\}.$ This completes the proof of the theorem.
\qed
\\\\
{\bf Proof of Theorem \ref{greens-sing-estimate}.} The upper and lower bounds of $G$, namely \eqref{upper-lower-bound}, follow from \eqref{eucli-estimate} and \eqref{ricci-estimate}.\\\\
To prove the estimate on the gradient, first note that we have the following   pointwise estimate which follows from \cite{Yau} and the subsequent improvement obtained in \cite{LW}: There exists positive constants $C_1,C_2$ depending on the lower Ricci curvature bound and the dimension $n$ such that
 \begin{equation}\label{gradient-point-est}
|\nabla_g(\log G(x,\cdot))| \;\le \; \frac{C_1}{d_g(x,\cdot)} + C_2.
\end{equation}

Combining this with the estimate on $G$, we get the existence of a positive constant $C$ such that on $B(x,R) $, uniformly in $ x\in M$,
\begin{equation}\label{gradient-point-est-1}
|\nabla_gG(x,\cdot)| \;\le \; \ C \left[d_g(x,\cdot)\right]^{1-n}.
\end{equation} 
Let $\Phi$ be as in \eqref{Euc-funda}, then using the notation in Theorem \ref{volumeelement}, we get 
$$ -\Delta_g \Phi ({\rm exp}_x(r\, \theta))= \Phi^\prime(r)\frac{\partial}{\partial r}\ln  (A_x(r,\theta))\ \ {\rm in}\ \ M\setminus \{x\},$$
where $\frac{\partial}{\partial r}$ denotes the radial derivative in normal coordinates centered at $x$, and $\Exp_x$ stands for the Riemannian exponential map at $x$. Using our curvature bound, we infer from Theorem \ref{volumeelement}, the estimate $|\frac{\partial}{\partial r}\ln (A_x(r,\theta))| \le Cr$ where $C$ is uniform in $x$ and $r \leq R$. Thus, the function $H(x,\cdot)$ defined by $H(x,{\rm exp}_x(r\,\theta)) \ := \ \Phi^\prime(r)\frac{\partial}{\partial r}\ln (A_x(r,\theta)) $ satisfies the following estimate on $B(x,R) $, uniformly in $ x\in M$,
\begin{equation}\label{h-estimate}
|H(x,\cdot) |\ \le C \left[d_g(x,\cdot)\right]^{2-n}.
\end{equation}
We also have in the sense of distributions 
\begin{equation}
-\Delta_g \Phi (d_g(x,\cdot))= \delta_x \ + \ H(x,\cdot)\ \ {\rm in}\ \ M,
\end{equation}
where $\delta_x$ denotes the Dirac delta distribution at $x.$ \\
Fix $R>0$ and choose a smooth function $f :[0,\infty)\rightarrow \R$ such that $f=1$ on $[0,R]$ and $f=0$ in $[2R,\infty)$. For $x\not= y\in M$, define $U(x,y)$ by 
\begin{equation}
U(x,y) \ := \ \int\limits_MG(y,z)f(d_g(x,z))H(x,z)\ d\mu_g(z).
\end{equation}
Using the estimates on $G$ and $H$, we can see that in the sense of distributions
\begin{equation}
-\Delta_g U(x,\cdot) \ = \ f(d_g(x,\cdot))H(x,\cdot), 
\end{equation}
and hence as distributions 
\begin{equation}
-\Delta_g \left[G(x,\cdot)\ - \ \Phi (d_g(x,\cdot))\ +\ U(x,\cdot)\right] \ =  0 \ \ {\rm in}\ \ B(x,R).
\end{equation}
In other words, the function $h^x$ defined by
$$h^x(y)= G(x,y)\ - \ \Phi (d_g(x,y))\ +\ U(x,y)$$
is harmonic in $B(x,R)$, and we claim $h^x$ is bounded on $\partial B(x,R)$, uniformly in $x$. This claim follows once we prove the same property for $U(x,\cdot)$. We will estimate $U(x,\cdot)$  by writing it in the normal coordinates centered at $x.$  Let us identify isometrically  the tangent space of $M$ at $x$ with the Euclidean space $\R^n$ by fixing a $g$-orthonormal basis.  Since $ K_g \le 0$, by Rauch's comparison theorem, we get for any two points $z_i\in \R^{n}, i=1,2$, 
$$d_g(\Exp_x(z_1),\Exp_x(z_2)) \ge |z_1-z_2| .$$ 
Since $\Phi$ is decreasing, we get $\Phi(d_g(\Exp_x(z_1),\Exp_x(z_2))) \le \Phi(|z_1-z_2|)$. We also set $\Exp_x^{-1}(z)= \tilde{z}$ for an arbitrary point $z \in M$.

Using the lower Ricci curvature bound, we can estimate from above the volume element; precisely, if we set $d \tilde{z}$ the Lebesgue measure,  Theorem \ref{volumeelement} can be rephrased as 
$$d\mu_g (z) \leq  \left(\frac{\sinh (b|\tilde{z}|)}{b|\tilde{z}|}\right)^{n-1} \, d\tilde{z}.$$
Thus, 
$$|U(x,y)| \le C\int\limits_{B(x,2R)}\Phi(d_g(y,z))\left[d_g(x,z)\right]^{2-n} d\mu_g(z)$$
$$ \le C\int\limits_{|\tilde{z}|<2R}|\tilde{y}-\tilde{z}|^{2-n}|\tilde{z}|^{2-n} \left(\frac{\sinh (b|\tilde{z}|)}{b|\tilde{z}|}\right)^{n-1} \, d\tilde{z}$$
$$\le C\int\limits_{|\tilde{z}|<2R}|\tilde{y}-\tilde{z}|^{2-n}|\tilde{z}|^{2-n}  d\tilde{z}\le \left\{\begin{array}{ll}
C|\tilde{y}|^{4-n} & {\rm if} \ n>4,\\
C(1+|\ln |\tilde{y}||) & {\rm if} \ n=4,\\
C & {\rm if} \ n=3,\\
\end{array}\right.$$
where $C$ depends on $R$ via $\max\limits _{0<t<2R}(\frac{\sinh bt}{bt})^{n-1}$. Going back to the original variables we get 
$$ |U(x,y)| \le \left\{\begin{array}{ll}
C \,d_g(x,y)^{4-n} & {\rm if} \ n>4,\\
C(1+|\ln d_g(x,y)|) & {\rm if} \ n=4,\\
C & {\rm if} \ n=3.\\
\end{array}\right.$$
This proves the uniform bound of $U$ and hence $h^x$ on $\partial B(x,R)$. Since $h^x$ is harmonic in $B(x,R)$, the gradient of $h^x$ is uniformly bounded in $ B(x,\frac{R}{2})$ thanks to the gradient estimate already mentioned in \eqref{gradient-point-est}. 
Thus, 
$$\nabla_g G(x,\cdot) = \nabla_g \Phi(d_g(x,\cdot)) - \nabla_gU(x,\cdot) + \nabla_g h^x,$$
and hence it remains to estimate $\nabla_gU(x,\cdot) $.\\
By definition of $U,$
 $$ \nabla_gU(x,\cdot) = \ \int\limits_M\nabla_gG(y,\cdot)f(d_g(x,\cdot))H(x,\cdot)\ d\mu_g,$$
thus, using the estimates \eqref{gradient-point-est-1} and \eqref{h-estimate}, and proceeding exactly as we estimated $U$ above,  we get, when $n>3$, the estimate $ |\nabla_gU(x,\cdot) | \le C [d_g(x,\cdot)]^{3-n}$, and,  when $n=3$, \ $ |\nabla_gU(x,\cdot) | \le C [1+ |\log d_g(x,\cdot)|]$. This completes the proof.
\qed
\\\\
{\bf Proof of Theorem \ref{greens-estimate}.}  Fix $x\in M$ and recall that  $G^x(y)=G(x,y)$. Then, it follows from Theorem \ref{greens} that
\begin{equation}\label{comp-ball}
B(x,\Psi_b^{-1}(t)) \subset \{y : G^x(y)>t\} \subset B(x,\Psi_a^{-1}(t)).
\end{equation}
Let $B_R$ and $G_R$ be as in the proof of Theorem \ref{greens}. Define for $y\not=x$, $G_R^x(y):= G_R(x,y)$, then we know that $G_R^x$ monotonically converges to $G^x$. For $t>0$ and $R>0$, define the compactly supported function $$H_R^t(y) \;=\; \min\{t,G_R^x(y)\}.$$
Using Theorem \ref{mckean} with $p=2$, we get 
\begin{equation}\label{poincare-estimate}
\left[\frac{(n-1)a}{2}\right]^2\int\limits_{B_R}(H_R^t)^2\;d\mu_g\;\le\; \int\limits_{B_R}|\nabla_g H_R^t|^2 d\mu_g.
\end{equation}
Now, 
\begin{eqnarray}\label{eqHR}
\int\limits_{B_R}|\nabla_g H_R^t|^2 d\mu_g  & =& \int\limits_{B_R\cap \{G^x_R<t\}}|\nabla_g G_R^x|^2 d\mu_g \nonumber \\
&=& -\int\limits_{B_R\cap \{G^x_R<t\}}(\Delta_g G_R^x)G_R^x d\mu_g \;-\int\limits_{ \{G^x_R=t\}}\left(\frac{\partial G_R^x}{\partial \nu}\right)G_R^x \nonumber \\
&=& -t\, \int\limits_{ \{G^x_R=t\}}\frac{\partial G_R^x}{\partial \nu},
\end{eqnarray}

where $\nu$ is the outward unit normal of $\{G^x_R  > t\}$, and we have used  $\Delta_gG^x_R=0$ in $M\setminus \{x\}$. For small enough $\epsilon >0$, we get by applying Green's formula on $\{G^x_R  > t\}\setminus B(x,\epsilon)$: 
$$ \int\limits_{ \{G^x_R=t\}}\frac{\partial G_R^x}{\partial \nu} +  \int\limits_{\partial B(x,\epsilon) } \frac{\partial G_R^x}{\partial \nu} = \int\limits_{ \{G^x_R >t\}\setminus B(x,\epsilon)}  \Delta_g G_R^x\  d\mu_g  =0,$$

% \left(\int\limits_{ \{G^x_R=t\}}\frac{\partial G_R^x}{\partial \nu} -\int\limits_{ \partial B(x,\epsilon)}\frac{\partial G_R^x}{\partial \nu} + \int\limits_{ \partial B(x,\epsilon)}%%\frac{\partial G_R^x}{\partial \nu}\right)t$$
%$$= \left(\int\limits_{\{G^x_R>t \}\setminus B(x,\epsilon)}\Delta_g G_R^x\  d\mu_g + \int\limits_{ \partial B(x,\epsilon)}\frac{\partial G_R^x}{\partial \nu}\right)t$$
where $\nu$ on $\partial B(x,\epsilon)$ is the unit inward normal of $B(x,\epsilon)$.
 Inserting this relation into \eqref{eqHR},  we get, by definition of $G_R^x$,
$$\int\limits_{B_R}|\nabla_g H_R^t|^2 d\mu_g = t\, \lim\limits_{\epsilon \rightarrow 0}\int\limits_{ \partial B(x,\epsilon)}(\frac{\partial G_R^x}{\partial \nu}) =t.$$
Using this estimate in \eqref{poincare-estimate}, and taking the limit $R\rightarrow \infty$ we get
\begin{equation}\label{basic-estimate}
\left[\frac{(n-1)a}{2}\right]^2\left[t^2 \mu_g(\{G^x\geq t\}) + \int\limits_{\{G^x<t\}}(G^x)^2\;d\mu_g\right]\;\le\; t.
\end{equation}
Hence $ \int\limits_{\{G^x<t\}}(G^x)^2\;d\mu_g\;\le Ct$ and \eqref{infinity-estimate} follows from \eqref{comp-ball}.\\To prove \eqref{local-estimate}, first observe from \eqref{basic-estimate} that 
\begin{equation}\label{sublevel-measure}
\mu_g(\{G^x>t\}) \le \left[\frac{2}{(n-1)a}\right]^2\frac{1}{t}, \;\; \forall \;t>0.
\end{equation}
Also from \eqref{comp-ball}, Theorem \ref{volume}, and \eqref{volume-model} we have,
\begin{equation}\label{sublevel-infinity}
\mu_g(\{G^x>t\}) \le  \mu_g(B(x, \Phi^{-1}(t)))\le V^n_{-b^2}(\Phi^{-1}(t))\le C\left(\frac{1}{t}\right)^{\frac{n}{n-2}} ,\; t \ge 1.
\end{equation}
Thus, using \eqref{comp-ball} and \eqref{sublevel-measure}, we get
\begin{eqnarray*}\int\limits_{B(x,R)} G(x,y)\;d\mu_g(y) & \leq  & \int\limits_{G^x>\Psi_b(R)} G^x(y)\;d\mu_g(y) \\ &=& \int\limits_0^\infty\mu_g\left(\{G^x>t\}\cap \{G^x>\Psi_b(R)\}\right)dt \\
&=& \int\limits_0^{\Psi_b(R)}\mu_g\left(\{G^x>\Psi_b(R)\}\right)dt+\int\limits_{\Psi_b(R)}^\infty\mu_g\left(\{G^x>t\}\right)dt \\
&=& \left[\frac{2}{(n-1)a}\right]^2 + \int\limits_{\Psi_b(R)}^\infty\mu_g\left(\{G^x>t\}\right)dt.
\end{eqnarray*}
If $\Psi_b(R)\ge 1$, then \eqref{sublevel-infinity} implies that $\int\limits_{\Psi_b(R)}^\infty\mu_g\left(\{G^x>t\}\right)dt \le C$, where $C$ is independent of $x$. If $\Psi_b(R)< 1$, then \eqref{sublevel-measure} and  \eqref{sublevel-infinity} give
$$\int\limits_{\Psi_b(R)}^\infty\mu_g\left(\{G^x>t\}\right)dt= \int\limits_{\Psi_b(R)}^1\mu_g\left(\{G^x>t\}\right)dt+ \int\limits_1^\infty\mu_g\left(\{G^x>t\}\right)dt \le C(1+R).$$
This proves \eqref{local-estimate}. To prove \eqref{local-gradient-estimate}, first observe that if $R\le 1$, then using \eqref{gradient-point-est-1} we get
$$\int\limits_{B(x,R)}|\nabla_gG(x,\cdot)|d\mu_g \le C \int\limits_{B(x,R)}[d_g(x,\cdot)]^{1-n}d\mu_g \le C$$
uniformly in $x$ thanks to Theorem \ref{volumeelement}. This together with the estimate $$|\nabla_gG(x,\cdot)| \le C G(x,\cdot) \ \ {\rm in}\ \  M\setminus B(x,1)$$ 
(which follows from \eqref{gradient-point-est}), and \eqref{local-estimate} prove \eqref{local-gradient-estimate}.\\

The last identity \eqref{gradient-estimate} follows from \eqref{infinity-estimate} once we use the estimate \eqref{gradient-point-est}. We can also have the following alternate proof: \\
Choose a smooth function $f: \R \rightarrow [0,1]$ such that $f(r)=0$ if $r\le 1$ and $f(r)=1$ if $r\geq 2$, and define $f_R: M \rightarrow [0,1]$ by $f_R(y)= f(\frac{d_g(x,y)}{R})$.\\  Since $\Delta_gG^x_{\tilde R} =0$ in $ B_{\tilde{R}}\setminus \{x\}$, we get
$$\int\limits_{B_{\tilde{R}}}\Delta_gG^x_{\tilde{R}}(y) (f_R(y))^2G_{\tilde R}^x(y)\;d\mu_g(y)=0.$$
This implies
$$
\int\limits_{B_{\tilde{R}}}|\nabla_gG^x_{\tilde{R}}(y)|^2 (f_R(y))^2\;d\mu_g(y) \le 2\int\limits_{B_{\tilde{R}}}|\nabla_gf_R(y)| |\nabla_gG^x_{\tilde{R}}(y)| f_R(y)G^x_{\tilde{R}}(y)\;d\mu_g(y) $$
$$\le \frac{C}{R}\left(\int\limits_{B_{\tilde{R}}}|\nabla_gG^x_{\tilde{R}}(y)|^2 (f_R(y))^2\;d\mu_g(y) \right)^\frac{1}{2}\left(\int\limits_{\{y:R\le d_g(x,y)\le 2R\}}(G^x_{\tilde{R}})^2\right)^{^\frac{1}{2}}.$$
Now, \eqref{gradient-estimate} follows by taking ${\tilde{R}}\rightarrow \infty$ and using  \eqref{infinity-estimate}. 

\qed
\section{Proof of Theorem} In this section we will prove our main theorem. We follow the idea of converting the problem into a convolution type estimate problem introduced by Adams \cite{A} and further developed by Fontana \cite{Fonta} and Fontana-Morpurgo \cite{FonMo,FontaMo}. First, we will introduce these kernels and prove the necessary estimates on them using the estimates on $G$ and its gradient established in Section 3.
\subsection{Estimates on the Kernel.}

For positive $m \in \mathbb{N}$, we define the kernel \\$$K^m : M\times M \setminus \{(x,x) : x \in M \} \rightarrow (0,\infty)$$  by 
\begin{align}\label{kernel-defi}
K^m(x,y)=\begin{cases}
|\nabla_g G(x,y)|\ \ \ \ \ \ \ \ \ \   \mbox{if} \ \ \ m=1,\\
G(x,y)\ \ \ \ \ \ \ \ \ \ \ \ \ \ \ \  \mbox{if} \ \ \ m=2,\\
\int\limits_MK^{m-2}(x,\cdot)G(\cdot,y)\;d\mu_g\ \ \ \ \ \ \ \mbox{if}\ m \  \mbox{is even},\\
\int\limits_MK^{m-1}(x,\cdot)|\nabla_gG(\cdot,y)|\;d\mu_g\ \ \ \mbox{if}\ m \  \mbox{is odd}.
\end{cases}
\end{align}
First, we will show that $K^m$ is well-defined and satisfies the required estimates.
\begin{lem}\label{kernel-estimate} Let $(M,g)$ be an $n$-dimensional Hadamard manifold satisfying $ K_g\le -a^2$ and $Ric_g\ge -(n-1)b^2$ for some positive numbers $a,b$, then for $m<n$, $K^m$ is well-defined and satisfies the estimate

\begin{align}\label{ker-growth}
K^m(x,y)\leq \begin{cases}
\alpha_{n,m}\left[d_g(x,y)\right]^{m-n}\left(1+ C\left[d_g(x,y)\right]^{\frac{1}{2}}\right)\ \ \ \mbox{if} \ \ \ d_g(x,y)<1,\\
C e^{-\beta_m d_g(x,y)}\ \ \ \mbox{if} \ \ \ \ \ \ \ \ \ \ d_g(x,y) \geq 1,\\
\end{cases}
\end{align}
for some $\beta_m >0$, $C>0$, and $\alpha_{n,m}$ is given by
\begin{align}
\alpha_{n,m} = \begin{cases} \frac{\Gamma(\frac{n-m}{2})}{\omega_{n-1}2^{m-1}(\frac{m-2}{2})!\Gamma(\frac{n}{2})} \ \ \mbox{if}\ m \  \mbox{is even},\\
\frac{\Gamma(\frac{n-m+1}{2})}{\omega_{n-1}2^{m-1}(\frac{m-1}{2})!\Gamma(\frac{n}{2})} \ \ \mbox{if}\ m \  \mbox{is odd}.
\end{cases}
\end{align}
Moreover, there exists $\alpha_m>0$ and $C>0$ such that
\begin{equation}\label{infinity-kernel}
\int\limits_{M\setminus B(x,R)} \left(K^{m}(x,y)\right)^2\;d\mu_g(y) \le C \e^{-\alpha_m R},\;\; {\rm for \; all}\; R\ge 1. 
\end{equation}
\end{lem}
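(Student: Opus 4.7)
The plan is to induct on $m$, with $m=1$ and $m=2$ as base cases coming directly from Section 3. For $m=2$, \eqref{ker-growth} follows by combining the upper bound $G\le \Phi(d_g)$ of \eqref{eucli-estimate} with the exponential decay $G\le \Psi_a(d_g)$ of \eqref{hyp-estimate}; the constant $\alpha_{n,2}=1/((n-2)\omega_{n-1})$ is precisely the coefficient in $\Phi$, and no $O(d_g^{1/2})$ correction is even needed. For $m=1$, the near-diagonal asymptotic comes from \eqref{gradient-singularity}, while exponential decay at infinity follows from the pointwise gradient estimate \eqref{gradient-point-est} combined with the exponential decay of $G$. The $L^2$ tail \eqref{infinity-kernel} at $m=1,2$ is exactly \eqref{gradient-estimate} and \eqref{infinity-estimate} verbatim.

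\textbf{Inductive step, near-diagonal regime ($r:=d_g(x,y)<1$).} Assuming the estimates for $K^{m-2}$ (if $m$ even) or $K^{m-1}$ (if $m$ odd), I would fix a small reference radius $R_0$ and work in geodesic normal coordinates centered at $x$ on $B(x,R_0)$, identifying $T_xM\simeq \mathbb{R}^n$. Writing $\tilde z=\exp_x^{-1}(z)$, the two-sided sectional curvature bound (which holds under our assumptions, cf.\ the footnote in Theorem \ref{theorem1}) yields via Rauch comparison $d_g(z,y)=|\tilde z-\tilde y|(1+O(|\tilde z|^2+|\tilde y|^2))$, and Theorem \ref{volumeelement} gives $d\mu_g(z)=(1+O(|\tilde z|^2))\,d\tilde z$. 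Inserting the inductive near-diagonal asymptotic for $K^{m-2}$ and the asymptotic $G\sim \Phi(d_g)$ from Theorem \ref{greens-sing-estimate}, the leading contribution of $\int_{B(x,R_0)}K^{m-2}(x,z)G(z,y)\,d\mu_g(z)$ reduces to the Euclidean Riesz-potential integral
\begin{equation*}
\alpha_{n,m-2}\cdot \frac{1}{(n-2)\omega_{n-1}}\int_{\mathbb{R}^n}|\tilde z|^{m-2-n}|\tilde z-\tilde y|^{2-n}\,d\tilde z \ = \ \alpha_{n,m}|\tilde y|^{m-n},
\end{equation*}
the last equality being the classical composition of Riesz potentials together with a short Gamma-function manipulation that identifies the constant with the formula in the statement. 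Error terms from the volume element, the distance comparison, the $O(d_g^{1/2})$ corrections in the two asymptotics, and the complementary integral over $M\setminus B(x,R_0)$ (bounded by a constant thanks to exponential decay) will be shown to sum to $O(r^{m-n+1/2})$, producing the factor $(1+Cr^{1/2})$. The odd-$m$ case is structurally identical, with $G$ replaced by $|\nabla_g G|$ and \eqref{gradient-singularity} used in place of \eqref{upper-lower-bound}.

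\textbf{Inductive step, far regime and $L^2$ tail ($r\ge 1$).} Here I would split $K^m(x,y)=\int_{B(x,r/2)}+\int_{M\setminus B(x,r/2)}$. On $B(x,r/2)$ one has $d_g(z,y)\ge r/2$, so the $G(z,y)$ factor is at most $Ce^{-(n-1)ar/2}$; the remaining $\int_{B(x,r/2)}K^{m-2}(x,\cdot)\,d\mu_g$ is controlled by splitting once more at distance $1$, using the inductive near-diagonal bound (locally integrable since $m-2<n$) and the inductive exponential decay pitted against the volume bound $\mu_g(B(x,\rho))\le V^n_{-b^2}(\rho)$ of Theorem \ref{volume}. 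On $M\setminus B(x,r/2)$ the factor $K^{m-2}(x,z)$ is exponentially small, and the residual $G$ integral is handled via Cauchy--Schwarz with \eqref{infinity-kernel} at level $m-2$. Choosing $\beta_m$ sufficiently small (a convenient choice is $\beta_m=\tfrac{1}{2}\min(\beta_{m-2},(n-1)a)$, further reduced if necessary to dominate the volume growth rate $(n-1)b$) will yield the pointwise exponential bound, and integrating its square against $d\mu_g$ then gives \eqref{infinity-kernel} with some $\alpha_m>0$.

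\textbf{Main obstacle.} The subtlest point will be identifying the leading constant as exactly $\alpha_{n,m}$ with relative error only $O(r^{1/2})$. This forces one to exploit the second-order accuracy of the normal-coordinate expansions (where the two-sided sectional curvature bound genuinely enters, beyond what the Ricci lower bound alone gives) and to carefully tally all error sources so that each contributes at most $O(r^{m-n+1/2})$ uniformly in $x\in M$. In contrast, the exponential decay rates $\beta_m,\alpha_m$ need not be sharp: only their positivity is used downstream in the Adams-type inequality, so the bookkeeping at infinity can be kept loose.
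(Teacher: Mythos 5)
Your base cases and near-diagonal analysis are essentially the paper's (normal coordinates at $x$, one-sided Rauch comparison with the volume-element bound from Theorem~\ref{volumeelement}, reduction to the Euclidean composition-of-Riesz-potentials identity, with the $O(d_g^{1/2})$ correction tracked additively; the paper in fact needs only the one-sided inequality $d_g(\exp_x\tilde z,\exp_x\tilde y)\ge|\tilde z-\tilde y|$ rather than a two-sided asymptotic since $\Phi$ is decreasing). Your far-field and $L^2$-tail arguments, however, have two genuine gaps.

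\emph{Far regime.} You split $K^{m}(x,y)=\int_{B(x,r/2)}+\int_{M\setminus B(x,r/2)}$, which on the near ball requires $\int_{B(x,r/2)}K^{m-2}(x,\cdot)\,d\mu_g$ to grow slowly in $r$. But the only ball-$L^1$ estimates available (Theorem~\ref{greens-estimate}, \eqref{local-estimate}--\eqref{local-gradient-estimate}) are for $K^1=|\nabla_gG|$ and $K^2=G$, not for $K^{m-2}$ with $m\ge 5$. Pitting the inductive pointwise decay $e^{-\beta_{m-2}\rho}$ against the volume growth $\sim e^{(n-1)b\rho}$ fails in general: the pointwise decay rate one can extract (ultimately coming from $\Psi_a$ or the Poincar\'e constant $(n-1)a/2$) need not dominate $(n-1)b$, and nothing in the hypotheses prevents $b\gg a$. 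The paper avoids this by splitting around $y$, so the ball integral involves $K^i$ ($i\in\{1,2\}$), which has the linear-in-$R$ bound $\int_{B(y,R)}K^i(\cdot,y)\le C(1+R)$ from Theorem~\ref{greens-estimate}; the remaining piece is then handled by excising $B(x,1/2)$ and applying Cauchy--Schwarz with the inductive $L^2$ tail.

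\emph{The $L^2$ tail.} You propose to obtain \eqref{infinity-kernel} by ``integrating the square of the pointwise bound.'' This does not work for the same reason: $\int_{M\setminus B(x,R)}e^{-2\beta_m d_g(x,\cdot)}\,d\mu_g$ converges and decays exponentially only if $2\beta_m>(n-1)b$, which is not guaranteed. The paper's proof of \eqref{infinity-kernel} for $K^{m+2}$ is a genuinely different argument: one tests $-\Delta_gK_R^{m+2}=K_R^m$ against $f^2K_R^{m+2}$ for a cutoff $f$ vanishing near $x$, integrates by parts (a Caccioppoli-type identity), applies the McKean--Strichartz inequality (Theorem~\ref{mckean}) to absorb the $L^2$ norm of $fK_R^{m+2}$, and then iterates the resulting inequality over unit annuli to manufacture exponential decay from the fixed contraction factor $\alpha=\bigl[\tfrac12(\tfrac{(n-1)a}{2})^2+1\bigr]^{-1}<1$. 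This bypasses pointwise decay entirely, exactly as \eqref{infinity-estimate} and \eqref{gradient-estimate} are proved in Section~3 using the Poincar\'e inequality rather than $G\le\Psi_a$. Moreover the logical order is reversed relative to your plan: the paper first assumes \eqref{infinity-kernel} for $K^m$, uses it (via Cauchy--Schwarz) to prove the far-regime pointwise bound for $K^{m+i}$, and only afterwards proves \eqref{infinity-kernel} for $K^{m+2}$ by the cutoff argument. You would need to incorporate these two ideas to close the induction.
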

\begin{proof}  First, observe that when $m=1$ the lemma follows from \eqref{gradient-singularity}, \eqref{gradient-point-est}, and \eqref{gradient-estimate}. When $m=2$, it again follows from \eqref{hyp-estimate} and the estimate \eqref{infinity-estimate}. Next, we show that if the lemma is true for an even $m$ then it holds for $m+i$ with $i\in \{1,2\}$ provided $m+i<n$, and hence it will follow for all $m <n$. Also observe that if \eqref{ker-growth} holds with $R=1$ as threshold then, up to modifying the constants $C$, it also holds for any $R>0$.\\\\
According to \eqref{kernel-defi}, we have for $i\in\{1,2\},$
$$ K^{m+i}(x,y)=\int\limits_MK^{m}(x,z)K^i(z,y)\;d\mu_g(z).$$Let us consider the cases $d_g(x,y)<1$ and $d_g(x,y) \ge 1$ separately.\\\\

{\it Case 1:} Let $x,y \in M$ be such that $d_g(x,y)<1.$\\
$$ \int\limits_MK^{m}(x,z)K^i(z,y)\;d\mu_g(z) = \int\limits_{B(x,2)}K^{m}(x,z)K^i(z,y)\;d\mu_g(z)$$ $$+ \int\limits_{M\setminus B(x,2)}K^{m}(x,z)K^i(z,y)\;d\mu_g(z)$$
The second integral on the right is uniformly bounded independent of $x$ as it is bounded from above by 
$$ \left(\int\limits_{M\setminus B(x,2)}(K^{m}(x,z))^2\;d\mu_g(z)\right)^{\frac{1}{2}}\left(\int\limits_{M\setminus B(y,1)}(K^i(z,y))^2\;d\mu_g(z)\right)^{\frac{1}{2}}$$
, and using the estimates \eqref{infinity-estimate}, \eqref{gradient-estimate}, and \eqref{infinity-kernel}.\\
Next, we will estimate the first term. First, we will consider the case $i=2$. From \eqref{upper-lower-bound} and the fact that $K^{m}$ satisfies \eqref{ker-growth}, we get
$$ \int\limits_{B(x,2)}K^{m}(x,z)K^2(z,y)\;d\mu_g(z) \le $$ $$\int\limits_{B(x,2)}\alpha_{n,m}\left[d_g(x,z)\right]^{m-n}\left(1+ C\left[d_g(x,z)\right]^{\frac{1}{2}}\right) \Phi(d_g(z,y))d\mu_g(z).$$
We will estimate the right-hand side by writing it in the normal coordinates centered at $x$ as we did in the proof of Theorem \ref{greens-sing-estimate}.
Using the same notation and proceeding as before,  we get
$$ \mathcal{I}:= \int\limits_{B(x,2)}\left[d_g(x,z)\right]^{m-n}\left(1+ C\left[d_g(x,z)\right]^{\frac{1}{2}}\right) \Phi(d_g(z,y))d\mu_g(z) $$ 
$$\le \int\limits_{B(0,2)}|\tilde{z}|^{m-n}\left(1+ C|\tilde{z}|^{\frac{1}{2}}\right) \frac{|\tilde{z}-\tilde y|^{2-n}}{(n-2)\omega_{n-1}}\left(\frac{\sinh (b|\tilde{z}|)}{b|\tilde{z}|}\right)^{n-1}\; d\tilde{z}.$$

For $\tilde{z} \neq 0$, we decompose the integrand as follows

 \begin{multline*}
|\tilde{z}|^{m-n}\left(1+ C|\tilde{z}|^{\frac{1}{2}}\right) \frac{|\tilde{z}-\tilde y|^{2-n}}{(n-2)\omega_{n-1}}\left(\frac{\sinh (b|\tilde{z}|)}{b|\tilde{z}|}\right)^{n-1} = \\
|\tilde{z}|^{m-n}\left(1+ C|\tilde{z}|^{\frac{1}{2}}\right) \frac{|\tilde{z}-\tilde y|^{2-n}}{(n-2)\omega_{n-1}}\left(\left(\frac{\sinh (b|\tilde{z}|)}{b|\tilde{z}|}\right)^{n-1} -1   \right)+ \\
|\tilde{z}|^{m-n}\left(1+ C|\tilde{z}|^{\frac{1}{2}}\right) \frac{|\tilde{z}-\tilde y|^{2-n}}{(n-2)\omega_{n-1}}.
\end{multline*}  

Note that each term above is nonnegative and, for $|\tilde{z}| \leq 2$, 
$$0 \leq  \left(\frac{\sinh (b|\tilde{z}|)}{b|\tilde{z}|}\right)^{n-1} -1 \leq \tilde C|\tilde{z}|^2.$$
Combining these facts together, we obtain
\begin{multline*}
 \mathcal{I} \leq \int\limits_{B(0,2)}   |\tilde{z}|^{m-n} \frac{|\tilde{z}-\tilde y|^{2-n}}{(n-2)\omega_{n-1}}\; d\tilde{z} +  C \int\limits_{B(0,2)}  |\tilde{z}|^{m-n+ 1/2}\frac{|\tilde{z}-\tilde y|^{2-n}}{(n-2)\omega_{n-1}}\; d\tilde{z} \\
+ \hat{C} \int\limits_{B(0,2)}    |\tilde{z}|^{m-n+2}\frac{|\tilde{z}-\tilde y|^{2-n}}{(n-2)\omega_{n-1}}                 \; d\tilde{z}
\end{multline*}  

Bounding each term by integrating over $\R^n$ instead of $ B(0,2)$, 
 and using, for $0<\alpha,\beta < n$ such that $\alpha+\beta <n$, 
$$\int\limits_{\R^n}|x|^{\alpha-n}|x-y|^{\beta-n}\; dx\;=\;\frac{\gamma(\alpha)\gamma(\beta)}{\gamma(\alpha+\beta)} |y|^{\alpha+\beta -n},$$
where 
$$\gamma(x)=2^x \pi^{\frac{n}{2}}\frac{\Gamma(\frac{x}{2})}{\Gamma(\frac{n-x}{2})} $$

(see \cite{Stein}, Chapter 5), we get the estimate in this case.\\
Next, we consider the case $i=1$ where the arguments are similar, and hence we will only outline the proof.\\
$$ \int\limits_{B(x,2)}K^{m}(x,z)K^1(z,y)\;d\mu_g(z) \le \int\limits_{B(x,2)}K^{m}(x,\cdot)|\nabla_g \Phi(d_g(\cdot,y))|\;d\mu_g$$
$$ + \int\limits_{B(x,2)}K^{m}(x,\cdot)|\nabla_g G(\cdot,y) -\nabla_g \Phi(d_g(\cdot,y))|\;d\mu_g  = \ \mathcal{I} \ + \ \mathcal{II}. $$ 
We can proceed exactly as in the case of $i=2$ to estimate $\mathcal{I}$ and we see that we get the exact constant $\alpha_{n,m+1}$. While $\ \mathcal{II} $ can be estimated by using \eqref{gradient-singularity} to get
 $$\mathcal{II} \le \left\{\begin{array}{ll}
   C\int\limits_{B(x,2)}\left[d_g(x,z)\right]^{m-n} \left[d_g(z,y)\right]^{3-n}d\mu_g(z) & {\rm if} \ n>3,\\
   C\int\limits_{B(x,2)}\left[d_g(x,z)\right]^{m-n} \left[1+|\log d_g(z,y)|\right]d\mu_g(z) & {\rm if} \ n=3.
\end{array} \right. $$
When $n=3$, the only possible value of $m$ to be considered is $m=2$, but $m+1=3=n$ and hence we have to consider only $n>3$. As estimated before, we can easily see that $\mathcal{II} \le C \left[d_g(x,y)\right]^{m+\frac{3}{2}-n}$, and this completes the estimates of {\it Case 1}.\\\\
{\it Case 2:} Let  $x,y \in M$ be such that $d_g(x,y)\ge 1.$\\\\
Let us denote $d:= d_g(x,y)$, then
$$ \int\limits_MK^{m}(x,z)K^i(z,y)\;d\mu_g(z) = \int\limits_{B(y,\frac{d}{2})}K^{m}(x,z)K^i(z,y)\;d\mu_g(z) $$ $$+ \int\limits_{M\setminus B(y,\frac{d}{2})}K^{m}(x,z)K^i(z,y)\;d\mu_g(z)$$
Since $K^{m}$ satisfies the lemma, we get using \eqref{local-estimate} and \eqref{local-gradient-estimate}
$$ \int\limits_{B(y,\frac{d}{2})}K^{m}(x,z)K^i(z,y)\;d\mu_g(z) \le C e^{-\alpha_m \frac{d}{2}}\int\limits_{B(y,\frac{d}{2})}K^i(z,y)\;d\mu_g(z)\le C e^{-\alpha_m\frac{d}{4}}$$
where $C,\alpha_m $ are independent of $x$ and $y$.
Now
$$\int\limits_{M\setminus B(y,\frac{d}{2})}K^{m}(x,z)K^i(z,y)\;d\mu_g(z)=\int\limits_{B(x,\frac{1}{2})}K^{m}(x,z)K^i(z,y)\;d\mu_g(z)$$ 
$$+\int\limits_{M\setminus \left[B(y,\frac{d}{2}) \cup B(x,\frac{1}{2})\right]}K^{m}(x,z)K^i(z,y)\;d\mu_g(z)
\le C\Psi_a(\frac{d}{2}) \int\limits_{B(x,\frac{1}{2})}K^{m}(x,z)\;d\mu_g(z)$$
$$+  \left(\int\limits_{M \setminus B(x,\frac{1}{2})}(K^{m}(x,z))^2\;d\mu_g(z)\right)^{\frac{1}{2}} \left(\int\limits_{M\setminus B(y,\frac{d}{2})}(K^i(z,y))^2\;d\mu_g(z) \right)^{\frac{1}{2}}$$
Using \eqref{infinity-estimate}, \eqref{gradient-estimate}, and \eqref{infinity-kernel}, we get a bound of the form $C e^{-\beta d}$ for the last term in the above inequality for some positive constants $\beta, C$ independent of $x,y.$  Next, we show that  $ \int\limits_{B(x,\frac{1}{2})}K^{m}(x,z)\;d\mu_g(z)$ is bounded independent of $x$. Since $K^{m}$ satisfies the lemma, writing in the normal coordinates centered at $x$, and using Theorem \ref{volumeelement}  we get
$$ \int\limits_{B(x,\frac{1}{2})}K^{m}(x,z)\;d\mu_g(z) \le C\int\limits_0^\frac{1}{2} \int\limits_{S^{n-1}}r^{m-n}r^{n-1}A_x(r,\theta)dr\;d\theta $$ $$\le C\int\limits_0^\frac{1}{2} r^{m-n} (\sinh (br))^{n-1}dr \le C.$$
Combining all the above estimates, we see that \eqref{ker-growth} holds for $K^{m+i}.$ \\\\
It remains to show that \eqref{infinity-kernel} holds for $K^{m+i}$. First observe that from \eqref{gradient-point-est}, and for $m$ even, we have $K^{m+1}(x,y) \le CK^{m+2}(x,y)$ when $d_g(x,y)>1$ where the constant is uniform in $x,y$. Thus, it is enough to establish \eqref{infinity-kernel} holds for $K^{m+2}$.\\\\ For this purpose, let us define $K^m_R(x,y)$ for $x,y\in B_R,\;x\not=y$ as in \eqref{kernel-defi} with $G_R$ instead of $G$, where $G_R$ is as in the Proof of Theorem \ref{greens}. Then using the monotone convergence theorem, we see that for all $x\not=y$, $K_R^m(x,y)\rightarrow K^m(x,y)$ as $R\rightarrow \infty$ and for any fixed $x\in M$, $K_R^m(x,\cdot)$ solves\\
 $$-\Delta_g K_R^{m+2}(x,\cdot)= K_R^{m}(x,\cdot),\;\; \mbox{ and } K_R^{m+2}(x,y)=0\;\; \mbox{ for }y\in \partial B_R .$$ 
Let $f\in C^1(M)$ be such that $ 0\le f\le 1$, and $f=0$ in a neighbourhood of $x$.\\
Multiplying the above equation by $f^2K_R^{m+2}$, we get
\begin{equation}\label{eq:estiLem41}\int\limits_{B_R} -\Delta_g K_R^{m+2}(x,y) (f(y))^2K_R^{m+2}(x,y) d\mu_g(y)= \int\limits_{B_R}K_R^{m}(x,y)K_R^{m+2}(x,y) (f(y))^2\;d\mu_g(y).
\end{equation}

The term on the left-hand side can be rewritten as
\begin{multline*} \int\limits_{B_R} -\Delta_g K_R^{m+2}(x,y) (f(y))^2K_R^{m+2}(x,y) d\mu_g(y)=\\
 \int\limits_{B_R} |\nabla_g(f(y) K_R^{m+2}(x,y))|^2 d\mu_g(y) - 
\int\limits_{B_R} |\nabla_gf|^2 (K_R^{m+2}(x,y))^2 d\mu_g(y).\end{multline*}

Inserting this into \eqref{eq:estiLem41}, we  obtain
\begin{eqnarray*} & &\int\limits_{B_R} |\nabla_g(f(y) K_R^{m+2}(x,y))|^2 d\mu_g(y) - 
\int\limits_{B_R} |\nabla_gf|^2 (K_R^{m+2}(x,y))^2 d\mu_g(y)  \\
&\le& \left(\int\limits_{B_R}(K_R^{m}(x,y)f(y))^2\;d\mu_g(y)\right)^{\frac{1}{2}}\left(\int\limits_{B_R}(K_R^{m+2}(x,y) f(y))^2\;d\mu_g(y)\right)^{\frac{1}{2}} \\
&\le& C\int\limits_{B_R}(K_R^{m}(x,y)f(y))^2\;d\mu_g(y) + \frac{1}{2}\left(\frac{(n-1)a}{2}\right)^2 \int\limits_{B_R}(K_R^{m+2}(x,y) f(y))^2\;d\mu_g(y), 
\end{eqnarray*}
where we apply Young's inequality to get the last line. Using Theorem \ref{mckean} and taking the limit $R\rightarrow \infty$, we get
\begin{equation}\label{cutoff}
\begin{array}{ll}
\frac{1}{2}\left(\frac{(n-1)a}{2}\right)^2 \int\limits_M (f(y) K^{m+2}(x,y))^2 d\mu_g(y) - 
 & \int\limits_{M}|\nabla_gf|^2  (K^{m+2}(x,y))^2 d\mu_g(y)\\
 & \le C\int\limits_{M}(K^{m}(x,y)f(y))^2\;d\mu_g(y)
\end{array}
\end{equation}
Taking $f$ such that $f=0$ in $B(x,\frac{1}{2})$ and $f=1$ in $M\setminus B(x,1)$, we get
$$\int\limits_{M\setminus B(x,1)} (K^{m+2}(x,y))^2 d\mu_g(y)\le C \int\limits_{B(x,1)\setminus B(x,\frac{1}{2})} (K^{m+2}(x,y))^2 d\mu_g(y)$$
$$ +C \int\limits_{M\setminus B(x,\frac{1}{2})} (K^{m}(x,y))^2 d\mu_g(y). $$
The first term on the right-hand side of the above inequality is bounded independently of $x$ as $K^{m+2}(x,y) \le C (d_g(x,y))^{m+2-n}$ and the measure of the annulus is bounded independently of $x$ thanks to the lower bound on the Ricci curvature. The second term is bounded by assumption. Thus, there exists a $C>0$ such that for all $x\in M$
\begin{equation}\label{induction-bound}
\int\limits_{M\setminus B(x,1)} (K^{m+2}(x,y))^2 d\mu_g(y)\le C. 
\end{equation}  

Let $R>0$ and choose $f_R \in C^1(M)$ such that
$$f_R=0 \;{\rm in}\; B(x,R),\; f_R=1 \;{\rm in}\; M \setminus B(x,R+1),\; |\nabla_gf_R|\le 1,\; 0\le f_R\le 1. $$
Then, by taking $f=f_R$ in \eqref{cutoff}, and using the fact that 
$$B(x,R+1)\setminus B(x,R) = \left(M \setminus B(x,R)\right)\setminus \left( M \setminus B(x,R+1)\right), $$
the equation \eqref{cutoff} simplifies to
$$ \left[\frac{1}{2}\left(\frac{(n-1)a}{2}\right)^2 +1 \right]\int\limits_{M\setminus B(x,R+1)} (K^{m+2}(x,y))^2 d\mu_g(y)$$ 
$$\le \int\limits_{M\setminus B(x,R)} (K^{m+2}(x,y))^2 d\mu_g(y)+ C\int\limits_{M\setminus B(x,R)}(K^{m}(x,y))^2\;d\mu_g(y).$$
Thus, if we denote $\alpha = \left[\frac{1}{2}\left(\frac{(n-1)a}{2}\right)^2 +1 \right]^{-1} $, then $0<\alpha <1$ and it satisfies for all $R>0$,
$$ \int\limits_{M\setminus B(x,R+1)} (K^{m+2}(x,y))^2 d\mu_g(y)\le \alpha\int\limits_{M\setminus B(x,R)} (K^{m+2}(x,y))^2 d\mu_g(y) $$
$$+ C\alpha\int\limits_{M\setminus B(x,R)}(K^{m}(x,y))^2\;d\mu_g(y).$$
Now, let $R>1$, then $k\le R <k+1$ for some $k\in \mathbb{N}$, and a repeated use of the above inequality gives
$$\int\limits_{M\setminus B(x,R)} (K^{m+2}(x,y))^2 d\mu_g(y)\le \int\limits_{M\setminus B(x,k)} (K^{m+2}(x,y))^2 d\mu_g(y)$$
$$ \le \alpha^{k-1}\int\limits_{M\setminus B(x,1)} (K^{m+2}(x,y))^2 d\mu_g(y)\; +\;\sum\limits_{i=1}^{k-1}C \alpha^i\int\limits_{M\setminus B(x,k-i)}(K^{m}(x,y))^2\;d\mu_g(y) $$
$$ \le \alpha^{R-2}\int\limits_{M\setminus B(x,1)} (K^{m+2}(x,y))^2 d\mu_g(y)\; +\;\sum\limits_{i<\frac{k}{2}}C \alpha^i\int\limits_{M\setminus B(x,k-i)}(K^{m}(x,y))^2\;d\mu_g(y) $$
$$+ \sum\limits_{i\ge\frac{k}{2}}^{k-1}C \alpha^i\int\limits_{M\setminus B(x,k-i)}(K^{m}(x,y))^2\;d\mu_g(y)$$
$$\le  \alpha^{R-2}\int\limits_{M\setminus B(x,1)} (K^{m+2}(x,y))^2 d\mu_g(y)+ Ck\int\limits_{M\setminus B(x,\frac{k}{2})}(K^{m}(x,y))^2\;d\mu_g(y) $$ $$ + Ck\alpha^{\frac{k}{2}}\int\limits_{M\setminus B(x,1)}(K^{m}(x,y))^2\;d\mu_g(y) $$ 
$$\le C e^{(R-2)log\alpha} + Cke^{-\alpha_m\frac{k}{2}} + Ck \alpha^{\frac{k}{2}}\le C e^{-\alpha_{m+2} R}$$
for some ${\alpha}_{m+2}>0$, thanks to \eqref{induction-bound}.
\end{proof}
\subsection{Symmetrization of the kernel.} Recall, for a function $f:M\rightarrow [-\infty, \infty ]$ the distribution function of $f$ is given by
$$\lambda_f(t) \; = \; \mu_g(\{x\in M : |f(x)|>t\})\;,\; t\in \R,$$
and its nonincreasing rearrangement $f^\ast :(0,\infty) \rightarrow (0,\infty)$
is defined by
$$f^\ast(t)\;=\; \inf\{s : \lambda_f(s)\le t \}\; , \; t>0.  $$
For $K : M\times M \rightarrow [-\infty, \infty]$, denote by $K^x$ the function $y\rightarrow K(x,y)$. Denote by $K^\ast $ and $K^{\ast\ast} $ the functions
$$K^\ast(t)\; =\; \sup\limits_{x\in M}(K^x)^\ast(t)\;\;,\;K^{\ast\ast}(t)\; = \;\frac{1}{t}\int\limits_0^t K^\ast (s)\; ds \ , \  t>0.$$
We have the following estimate on the kernel $K^m$ introduced in \eqref{kernel-defi}.\\
\begin{thm}\label{symme-estimates} Let $(M,g),\; K^m$ be as in Lemma \ref{kernel-estimate} then
\begin{enumerate}
\item[(i)]there exist constants $A,\beta >0$ such that
\begin{equation}\label{symme-zero}
(K^m)^\ast(t)\  \le \ \left[\beta_0(m,n)t \right]^{\frac{m-n}{n}}\left[1+ At^\beta\right]  \ \ {\text for} \  0<t\le 1. 
\end{equation}
\item[(ii)] For any $\sigma \in (0,1)$, there exists $B_{\sigma}>0$ such that
\begin{equation}\label{symm-infty}
(K^m)^\ast(t)\  \le \ \frac{B_\sigma}{t^\sigma}  \ \ {\text for} \  t > 1. 
\end{equation}
\end{enumerate}
\end{thm}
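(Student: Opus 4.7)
The plan is to bound, uniformly in $x$, the distribution function $\lambda_{K^x}(s):=\mu_g(\{y:K^m(x,y)>s\})$ by combining the pointwise estimates from Lemma~\ref{kernel-estimate} with the Bishop-Gromov volume comparison (Theorem~\ref{volume}), and then to invert the relation in order to recover $(K^m)^*(t) = \sup_x (K^x)^*(t)$. For part (i) I would use the near-diagonal branch of \eqref{ker-growth}: for $s$ large, the pointwise bound forces $\{K^m(x,\cdot)>s\}\subset B(x,r_s)$, where $r_s$ is defined implicitly by $\alpha_{n,m}r_s^{m-n}(1+Cr_s^{1/2})=s$, so $r_s \sim (\alpha_{n,m}/s)^{1/(n-m)}$ as $s\to\infty$. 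Bishop-Gromov together with the small-$r$ expansion of $V^n_{-b^2}(r)$ coming from \eqref{volume-model} gives
\[
\lambda_{K^x}(s) \le \mu_g(B(x,r_s)) \le V^n_{-b^2}(r_s) = \frac{\omega_{n-1}}{n} r_s^n \bigl(1+O(r_s^2)\bigr).
\]
Inverting this relation — first solving $\frac{\omega_{n-1}}{n}r^n(1+O(r^2))=t$ to get $r=(nt/\omega_{n-1})^{1/n}(1+O(t^{2/n}))$, then substituting into $s=\alpha_{n,m}r^{m-n}(1+O(r^{1/2}))$ — yields
\[
(K^x)^*(t) \le \alpha_{n,m}\left(\frac{nt}{\omega_{n-1}}\right)^{(m-n)/n}\bigl(1+O(t^{1/(2n)})\bigr).
\]
Taking the supremum in $x$ produces \eqref{symme-zero} once the identity $\alpha_{n,m}(n/\omega_{n-1})^{(m-n)/n}=\beta_0(m,n)^{(m-n)/n}$ is verified, a direct Gamma function computation using \eqref{beta0}, the explicit form of $\alpha_{n,m}$ in Lemma~\ref{kernel-estimate}, and $\omega_{n-1}=2\pi^{n/2}/\Gamma(n/2)$.

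For part (ii), I would use the exponential decay branch of \eqref{ker-growth}: for small $s$ one has $\{K^m(x,\cdot)>s\}\subset B(x,R_s)$ with $R_s=\beta_m^{-1}\log(C/s)$, so by Bishop-Gromov
\[
\lambda_{K^x}(s) \le V^n_{-b^2}(R_s) \le Ce^{(n-1)bR_s} = Cs^{-(n-1)b/\beta_m}.
\]
Inverting gives $(K^m)^*(t)\le Ct^{-\sigma_0}$ for some $\sigma_0>0$, which already establishes \eqref{symm-infty} for every $\sigma\le\sigma_0$. To reach the full range $\sigma\in(0,1)$, I would refine the argument by coupling the pointwise exponential decay with the tail $L^2$ estimate \eqref{infinity-kernel}: interpolation between the uniform $L^\infty$ and exponentially decaying $L^2$ bounds on the annular regions $B(x,R)^c$ yields uniform $L^q$ bounds on $K^m(x,\cdot)$ for a range of $q$'s approaching $1$, after which Chebyshev's inequality in $L^{1/\sigma}$ produces the required weak-type estimate for any $\sigma\in(0,1)$.

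The principal obstacle lies in part~(i), namely reproducing \emph{exactly} the constant $\beta_0(m,n)^{(m-n)/n}$ in the leading term of $(K^m)^*(t)$. The match is possible because, at small scales, Bishop-Gromov sees only the Euclidean volume element and so the sharp Euclidean Adams constant survives in the curved setting; the $O(r^2)$ perturbation in the volume expansion is precisely what generates the admissible remainder $O(t^{1/(2n)})$ in \eqref{symme-zero}. Carefully tracking the two competing expansions — the one for $r_s$ coming from the kernel estimate \eqref{ker-growth} and the one for $V^n_{-b^2}$ coming from \eqref{volume-model} — is the technically delicate point, since both contribute to the error term and one must ensure the dominant correction is the announced $t^{1/(2n)}$.
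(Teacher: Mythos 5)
Your treatment of part (i) matches the paper's: both bound the superlevel set $\{K^m(x,\cdot)>s\}$ by a ball of radius determined by inverting the near-diagonal estimate \eqref{ker-growth}, use Bishop--Gromov to pass to $V^n_{-b^2}$, expand the latter for small radius, and invert again, with the exact constant emerging from the Gamma-function identity $\alpha_{n,m}(n/\omega_{n-1})^{(m-n)/n}=\beta_0(m,n)^{(m-n)/n}$ and the admissible remainder $O(t^{1/(2n)})$ coming from the $O(r^{1/2})$ correction in \eqref{ker-growth}. This is essentially the paper's argument for \eqref{symme-zero}.

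Part (ii), however, has a genuine gap. To obtain $(K^m)^\ast(t)\le B_\sigma t^{-\sigma}$ for every $\sigma\in(0,1)$ you need, for small $s$, a bound of the form $\lambda_{K^x}(s)\le Cs^{-\gamma}$ with $\gamma$ arbitrarily close to $1$ (or $\gamma\le 1$). The pointwise tail $K^m(x,y)\le Ce^{-\beta_m d_g(x,y)}$ together with Bishop--Gromov only yields $\gamma=(n-1)b/\beta_m$, and nothing in Lemma~\ref{kernel-estimate} controls the relative size of $\beta_m$ and $(n-1)b$. Your proposed repair via interpolation does not close this: interpolating between $L^\infty(B(x,R)^c)$ and the exponentially decaying $L^2(B(x,R)^c)$ bound of \eqref{infinity-kernel} produces $L^q$ control only for $q\ge 2$, hence (via Chebyshev) $\sigma\le 1/2$. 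Using H\"older on dyadic annuli, the pointwise decay (rate $\beta_m$) and the $L^2$ tail decay (rate $\alpha_m$) combined with the volume growth $e^{(n-1)bR}$ give at best $\gamma = 2(n-1)b/\big((n-1)b+\alpha_m\big)$, which is $\le 1$ only when $\alpha_m\ge (n-1)b$. But the base case of the induction has $\alpha_2 = (n-1)a$ (from $\Psi_a(R)$ in \eqref{infinity-estimate}), and the pinching hypothesis forces $a\le b$, so in general $\alpha_m<(n-1)b$ and your chain of estimates cannot reach $\sigma$ close to $1$. The paper avoids this entirely: it derives the sharp level-set bound $\mu_g(\{G^x>s\})\le C/s$ (namely \eqref{sublevel-measure}) from McKean's Poincar\'e inequality, not from the exponential pointwise decay, obtains $(K^i)^\ast(t)\le C/t$ for $i=1,2$, and then propagates the $t>1$ estimate to $K^{m+i}$ by induction via O'Neil's rearrangement inequality (\cite[Lemma 2]{FonMo}). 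Those two ingredients --- the McKean-based distribution bound for $G$ and the O'Neil convolution lemma --- are precisely what your argument is missing.
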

\begin{proof} First note that if  $f(t) = At^{-\alpha}\left[ 1+Bt^\beta\right],\; t > 0$, for positive constants $A,B,\alpha, \beta $ such that $\beta <\alpha$, then there exists a $C>0$ such that  
$$ f^{-1}(t) \le \left[At^{-1}\right]^\frac{1}{\alpha}\left[ 1+ C t^{-\frac{\beta}{\alpha}}\right] \ \ {\text for} \ \ \  t > 1. $$
Using this together with \eqref{ker-growth} and Theorem \ref{volume}, we get for $t>1$,
$$
\mu_g(\{y\in M : K^m(x,y)>t\}) \le \mu_g(B(x,f^{-1}(t))) \le V^n_{-b^2}(f^{-1}(t)), 
$$
where $f$ is as above with $A=\alpha_{n,m}$, $\alpha= n-m$, $B =C$, and $\beta = \frac{1}{2}$. Now, substituting $V^n_{-b^2}(f^{-1}(t))$ using \eqref{volume-model}, we get for any $x\in M,$
$$\mu_g(\{y\in M : K^m(x,y)>t\}) \ \le \ \frac{\omega_{n-1}}{n}\left(\frac{\alpha_{n,m}}{t} \right)^{\frac{n}{n-m}}\left[1+ C t^{\frac{-1}{2(n-m)}} \right] \ \ {\text for} \ \ \  t > 1.$$
Again, if   $g(t) = At^{-\alpha}\left[ 1+Bt^{-\beta}\right],\; t > 0$, for positive constants $A,B,\alpha, \beta $, then there exists a $C>0$ such that  
$$ g^{-1}(t) \le \left[At^{-1}\right]^\frac{1}{\alpha}\left[ 1+ C t^{\frac{\beta}{\alpha}}\right] \ \ {\text for} \ \ \  0<t \le 1. $$
Using this fact together with the above estimate proves \eqref{symme-zero}.\\
To prove \eqref{symm-infty}, first recall from \eqref{sublevel-measure} and \eqref{sublevel-infinity} we have for any $x\in M$,
\begin{equation}\label{truc}
\mu_g(\{y\in M : G^x(y)>t\}) \ \le \begin{cases}
\ \frac{C}{t} \ \ {\text for} \ \ \ 0<t\le 1, \\
\ \frac{C}{t^{\frac{n}{n-2}}} \ \ {\text for} \ \ \ t > 1, 
\end{cases}
\end{equation}
where $C$ is independent of $x$. Hence
\begin{equation}\label{gstar}
G^\ast(t) \ \le \begin{cases}
\ \frac{C}{t^{\frac{n-2}{n}}} \ \ {\text for} \ \ \ 0<t\le 1, \\
\  \frac{C}{t}\ \ {\text for} \ \ \ t > 1. 
\end{cases}
\end{equation}
This immediately proves \eqref{symm-infty} when $m=2.$ 

%%%%%%%%%%%
We need similar estimates for $|\nabla_gG^x|^\ast(t)$. To get them, we combine the pointwise gradient estimate \eqref{gradient-point-est} together with the bounds \eqref{eucli-estimate} and \eqref{ricci-estimate} on the Green function. Using \eqref{truc}, we derive an upper bound for $\mu_g(\{y\in M : |\nabla_gG^x|(y)>t\})$, similar to \eqref{truc},  when $t$ is large or close to $0$. More precisely, up to modifying the constants, we get

%Also observe that using the relation $|\nabla_gG^x(y)| \le C\frac{G^x(y)}{d_g(x,y)}$ and \eqref{gstar} we get
\begin{equation}\label{dgstar}
|\nabla_gG^x|^\ast(t) \ \le \begin{cases}
\ \frac{C}{t^{\frac{n-1}{n}}} \ \ {\text for} \ \ \ 0<t\le 1 \\
\  \frac{C}{t}\ \ {\text for} \ \ \ t > 1 
\end{cases}
\end{equation}
uniformly in $x$. Combining \eqref{gstar} and \eqref{dgstar} we have for $i =1,2$
\begin{equation}\label{istar}
(K^i)^\ast(t) \ \le \begin{cases}
\ \frac{C}{t^{\frac{n-i}{n}}} \ \ {\text for} \ \ \ 0<t\le 1, \\
\  \frac{C}{t}\ \ {\text for} \ \ \ t > 1. 
\end{cases}
\end{equation}

 Now assume the result is true for some even integer $m.$ We claim that it will be true for $m+i$ if $m+i<n$, where $i=1,2.$\\ 
Fix $x\in M$, then
$$ K^{m+i}(x,y) = \int\limits_M K^m(x,z)K^i(z,y) d\mu_g(z)= \int\limits_M K^i(y,z)(K^m)^x(z)  d\mu_g(z)$$
i.e., for $x\in M$, $(K^{m+i})^x$ is obtained by integrating $(K^m)^x$ against the kernel $K^i$. Thus, it follows from the improved version of O'Neil's lemma (see \cite[Lemma2]{FonMo}) that
$$[(K^{m+i})^x]^\ast(t) \le [(K^{m+i})^x]^{\ast\ast}(t) \le t (K^i)^{\ast\ast}(t) [(K^{m})^x]^{\ast\ast}(t) + \int\limits_t^\infty (K^i)^{\ast}(s)[(K^{m})^x]^{\ast}(s)\; ds. $$
Now, the estimate \eqref{symm-infty} on $K^{m+i}$ follows from the induction assumption and \eqref{istar}.
\end{proof}
\subsection{Proof of theorem.} As stated before we will prove our theorem by writing the functions as integrals of the corresponding derivatives against kernels, thus following an idea initiated in \cite{A}, and developed further by Fontana and collaborators. Let us recall the following theorem which is essentially \cite[Theorem 3]{FontaMo}.\\
\begin{thm}\label{thmFM} Let $(M,g)$ be a Hadamard manifold and $K : M\times M  \rightarrow [-\infty,\infty]$ be a measurable function satisfying
$K(x,y)= K(y,x)$, for all $x,y$, and for some $1<q <\infty$,
\begin{equation}\label{ker-condition1}
K^\ast(t) \le \begin{cases} \left[At \right]^{\frac{-1}{q^\prime}}\left[1+ Ct^\beta\right]  \ \ {\text for} \ \ \ 0<t\le 1, \\
B t^{\frac{-1}{q^\prime}}  \ \ {\text for} \ \ \  t > 1, 
\end{cases}
\end{equation} 
and
\begin{equation}\label{ker-condition2}
\int\limits_1^\infty(K^\ast(t))^{q^\prime}\; dt < \infty,
\end{equation}
where $q^\prime = \frac{q}{q-1}$ and $\beta, A,B,C$ are fixed positive constants. For a measurable function $f:M\rightarrow \R$, define for $x \in M$,
\begin{equation}\label{int-operator}
Tf(x) \;= \; \int\limits_MK(x,y)f(y)\; d\mu_g(y),
\end{equation}
whenever the integral exists. Then, $Tf(x)$ is defined for a.e. $x\in M$ when $f\in L^q(M)$ and there exists a constant $\tilde C >0$ such that 
\begin{equation}\label{conclusion}
\int\limits_N \exp \left({A |Tf(x)|^{q^{\prime}}}\right) \ d\mu_g(x) \;\le \tilde{C}\left(1+ \mu_g(N)\right) 
\end{equation}
holds for all measurable subsets $N$ of $M$ with $\mu_g(N) < \infty$ and $f\in L^q(M)$ with $||f||_q\le 1$.
\end{thm}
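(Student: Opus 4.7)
The plan is to follow the Adams--O'Neil--Garsia scheme: pass to non-increasing rearrangements, apply O'Neil's lemma to $T$, and reduce everything to a one-dimensional Moser-type inequality.

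First, since $e^x - 1$ is increasing and vanishes at $0$, the Hardy--Littlewood rearrangement inequality gives, for any measurable $N \subset M$ with $\mu_g(N) < \infty$,
$$\int_N \exp(A|Tf|^{q'})\, d\mu_g \;\le\; \mu_g(N) + \int_0^{\mu_g(N)} \bigl[\exp(A\,((Tf)^*(t))^{q'}) - 1\bigr]\, dt.$$
So it suffices to bound the right-hand integral by $\tilde{C}(1 + \mu_g(N))$ uniformly for $f \in L^q(M)$ with $\|f\|_q \le 1$. Since $T$ is the integral operator with symmetric kernel $K$, the sharp form of O'Neil's lemma (\cite[Lemma 2]{FonMo}) yields
$$(Tf)^*(t) \;\le\; t\, K^{**}(t)\, f^{**}(t) + \int_t^\infty K^*(s)\, f^*(s)\, ds.$$
Substituting the hypotheses \eqref{ker-condition1} and \eqref{ker-condition2} expresses $(Tf)^*(t)$ in terms of $f^*$ alone: $K^*$ mimics the Adams--Riesz kernel $(At)^{-1/q'}$ near $t = 0$ (up to the perturbation $1 + Ct^\beta$) and lies in $L^{q'}$ near infinity.

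Next I would perform the exponential change of variables $t = e^{-\tau}$ and set $\varphi(\sigma) = e^{-\sigma/q} f^*(e^{-\sigma})$, so that $\|\varphi\|_{L^q(0,\infty)} = \|f^*\|_q = \|f\|_q \le 1$. The combined estimates from O'Neil plus \eqref{ker-condition1}--\eqref{ker-condition2} then become a scalar inequality of the form
$$A\,((Tf)^*(e^{-\tau}))^{q'} \;\le\; \tau + R(\varphi,\tau), \qquad \tau > 0,$$
where the remainder $R$ is controlled by the perturbation factor near $0$ and by the $L^{q'}$-tail of $K^*$. This is the classical Adams--Garsia setting; the one-dimensional Moser-type lemma then yields
$$\int_0^\infty \bigl[\exp(A\,((Tf)^*(e^{-\tau}))^{q'}) - 1\bigr]\, e^{-\tau}\, d\tau \;\le\; \tilde{C},$$
uniformly in $f$, and undoing the change of variables recovers the required bound on the Hardy--Littlewood integral from Step~1.

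The main obstacle, and the whole point of stating Theorem~\ref{thmFM} with its particular hypotheses, is preserving the \emph{sharp} constant $A$ in the final exponent. A naive estimate would multiply $A$ by the perturbation factor $(1 + Ct^\beta)^{q'}$ and destroy optimality, and would also let the $L^{q'}$-tail contaminate the leading exponent. Both are resolved by the standard Adams decomposition: one splits $\varphi$ at a level depending on $\tau$, treats the ``small'' part by a H\"older-type estimate that gets absorbed into $\tau$, and applies the exponential Moser argument only to the ``large'' part where $K^*$ behaves exactly like $(At)^{-1/q'}$. The conditions \eqref{ker-condition1} (sharp Adams-type singular behavior with constant $A$) and \eqref{ker-condition2} ($L^{q'}$-integrability at infinity) are precisely calibrated so this split goes through with the best constant $A$, which is what makes Theorem~\ref{thmFM} the right abstraction of the Adams inequality for integral operators on non-compact manifolds.
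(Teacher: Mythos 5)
The paper does not prove this theorem; it is quoted directly from Fontana--Morpurgo \cite[Theorem~3]{FontaMo}, so there is no in-paper proof to compare against. Your outline (Hardy--Littlewood rearrangement, the improved O'Neil estimate from \cite[Lemma~2]{FonMo}, the substitution $t=e^{-\tau}$, and the Adams--Garsia one-dimensional Moser lemma) is exactly the scheme used in that reference and in Adams \cite{A} and Fontana \cite{Fonta}. As a proof, however, it is incomplete: the technically decisive one-dimensional lemma, and the split argument showing that the perturbation factor $(1+Ct^{\beta})$ near $t=0$ and the $L^{q'}$ tail of $K^*$ do not damage the sharp constant $A$, are described but not established --- this is where essentially all the work lies, and merely invoking ``the standard Adams decomposition'' does not discharge it. Two smaller slips worth noting: with $\varphi(\sigma)=e^{-\sigma/q}f^*(e^{-\sigma})$ one has $\|\varphi\|_{L^q(0,\infty)}=\|f^*\chi_{(0,1)}\|_{L^q}$, not $\|f^*\|_{L^q}$ (harmless, since it is still $\le 1$); and the form $\tilde C(1+\mu_g(N))$ of the final bound requires the explicit observation that $(Tf)^*(t)$ is uniformly bounded for $t\ge 1$, which follows from your O'Neil estimate together with $\|f\|_q\le 1$ and \eqref{ker-condition2} but should be spelled out, as it is what produces the $\mu_g(N)$ term rather than a pure constant.
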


Using the above theorem and the estimates on the kernels developed in the previous section, we can now prove our main result.\\ 
\noindent {\bf Proof of Theorem \ref{theorem1}:} First note that a repeated use of \eqref{repre-1} gives  
$$ u(x) \ = \ (-1)^{\frac{k}{2}}\int\limits_M K^k(x,y)\nabla_g^k u(y)\ d\mu_g(y),\ \ u\in C_c^k(M)$$
when $k\in \mathbb{N}$ and $k$ is even. When $k$ is odd, applying the above result for $k+1$ and then integrating by parts gives
$$ u(x) \ = \ (-1)^{\frac{k-1}{2}}\int\limits_M \langle \nabla_gK^{k+1}(x,\cdot),\nabla_g^k u(\cdot)\rangle_g\ d\mu_g(y),\ \ u\in C_c^{k+1}(M).$$
Also we have from \eqref{kernel-defi} and for $k$ odd, $$|\nabla_gK^{k+1}(x,\cdot)| \ \le \ K^{k}(x,\cdot). $$
Combining these facts we get
\begin{equation}\label{u-rep}
|u(x)| \ \le T^k(|\nabla_g^k u|),
\end{equation} 
for $u\in C_c^{k+1}(M)$ and hence for $u\in C_c^{k}(M)$ by approximation, where $T^k$ is defined as in \eqref{int-operator} with $K(x,y)= K^k(x,y)$ when
$x\not=y$ and $K(x,x)=0.$ Moreover, from Theorem \ref{symme-estimates}, we see that $K^k$ satisfies the assumptions of the above theorem with $q=p=\frac{n}{k}$ and $A= \beta_0(k,n)$.  Thus, Theorem \ref{thmFM} applies and we get for  $u\in C_c^k(M)$ with $\int\limits_M|\nabla_g^ku|^p \ d\mu_g \ \le 1$, and $N\subset M$ with $\mu_g(N) < \infty$,
$$\int\limits_N\exp\left({\beta_0(k,n)|u(x)|^{p^{\prime}}}\right) \ d\mu_g(x)  \le \tilde{C}\left(1+ \mu_g(N)\right). $$
If $u\in C_c^k(M)$, then $$\mu_g(\{x\in M: |u(x)|> 1\}) \le ||u||_p^p.$$
Taking $N$ as this set, we get for all $u\in C_c^k(M)$ with $\int\limits_M |\nabla_g^ku|^{p} \, d\mu_g \, \le 1$:
$$\int\limits_{\{x:|u(x)|>1\}}\exp\left({\beta_0(k,n)|u(x)|^{p^{\prime}}}\right) \ d\mu_g(x)  \le \tilde{C}\left(1+||u||_p^p\right). $$
Thus, for $u\in C_c^k(M)$ with $\int\limits_M |\nabla_g^k u|^{p} \, d\mu_g \ \le 1$, %and $\int\limits_M |u|^p\ d\mu_g \ \le \lambda$,

\begin{equation*}
\begin{split}
\int\limits_ME_{[p-1]}\left({\beta_0(k,n)|u(x)|^{p^{\prime}}}\right) \ d\mu_g \; =  \hspace*{1cm}& \\
 \int\limits_{\{x:|u(x)|\le 1\}}E_{[p-1]}\left({\beta_0(k,n)|u(x)|^{p^{\prime}}}\right) \ d\mu_g +& \int\limits_{\{x:|u(x)|>1\}}E_{[p-1]}\left({\beta_0(k,n)|u(x)|^{p^{\prime}}}\right) \ d\mu_g \\
 \le C\int\limits_{\{x:|u(x)|\le 1\}}|u(x)|^{p} \ d\mu_g &+ \int\limits_{\{x:|u(x)|>1\}}\exp\left({\beta_0(k,n)|u(x)|^{p^{\prime}}}\right) \ d\mu_g \\
 \le \tilde{C}\Big( 1+  \int\limits_M |u|^p&\ d\mu_g(x)\Big) 
 \end{split}
\end{equation*}
 %\le \tilde{C}\left(1+ \lambda\right).

We get from Theorem \ref{mc-st-k} that if $u\in C_c^k(M)$ with $\int\limits_M |\nabla_g^ku|^p \, d\mu_g \ \le 1$, then $\int\limits_M |u|^p \, d\mu_g \ \le C_{k,p}$. Hence the conclusion of the theorem follows.% without imposing any restriction on $||u||_p$. \\

The optimality of the constant $\beta_0(k,n)$ follows using standard test functions (see \cite{A} for the proof in the Euclidean case and \cite[Proposition 3.6]{Fonta} for the Riemannian case). This completes the proof.
 \qed\\
 
\noindent{\bf Proof of Theorem \ref{theorem2}.} Let  $s$ be an integer satisfying $s \geq p-1$ . Then, it follows from Theorem \ref{theorem1} that there exists $C>0$ such that 
$$\frac{1}{s!}\int_{M} \left(\beta_0(k,n) |u(x)|^{p^{\prime}}\right)^s \ 
d\mu_g \ \le C  \left[\int\limits_M |\nabla_g^ku|^p  \ d\mu_g\right]^{\frac{p^\prime s}{p}} , \forall \ u \in C_c^k(M).$$
This immediately gives for $q=p^\prime s$,
$$ S_q \ge \frac{(\beta_0(k,n))^{\frac{p}{p^\prime}}}{(C s!)^{\frac{p}{q}}}$$
Thus, using interpolation if $ q =(1-\theta) p^\prime s + \theta p^\prime (s+1) ,\ \theta\in (0,1),$ we get 

$$ S_q \ge \frac{(\beta_0(k,n))^{\frac{p}{p^\prime}}}{(C s!)^{\frac{p}{q}}(s+1)^{\frac{p\theta}{q}}}$$
Taking the limit  $q\rightarrow \infty$ using Sterling's formula we get
$$\liminf_{q\rightarrow \infty}\left[ q^{p-1}S_q \right] \ \geq \ \left[\frac{p}{p-1} e \,\beta_0(k,n)\right]^{p-1}.$$
It remains to show that
$$\limsup_{q\rightarrow \infty}\left[ q^{p-1}S_q \right] \ \leq \ \left[\frac{p}{p-1} e \beta_0(m,n)\right]^{p-1}.$$

We will prove this inequality by using the test functions used by Adams \cite{A} to establish the best constant in Adams inequality in the Euclidean space. In fact we will use it by lifting to the manifold as done in \cite{Fonta}.\\
Let $\Phi :[0,1]\rightarrow \R$ be a $C^\infty$ function such that
$$\Phi(0) = \Phi^\prime(0) = \cdots = \Phi^{k-1}(0) = 0$$ and 
$$\Phi(1) = \Phi^\prime(1) = 1,\ \Phi^{\prime\prime}(1)= \cdots = \Phi^{k-1}(1) = 0. $$
For $\epsilon \in (0,\frac{1}{2})$ define 
$$
H(t) = \epsilon \Phi(\frac{t}{\epsilon})\  \chi_{[0,\epsilon]}(t)\ +\ t \chi_{(\epsilon,1-\epsilon]}(t) \ + \ \left( 1- \epsilon \Phi(\frac{1-t}{\epsilon})\right)\ \chi_{(1-\epsilon, 1]}(t) \ +\ \chi_{(1,\infty)}.$$ 
For $R\in (0,1)$, define the function $\psi_R$ by
$$ \psi_R(t)\ = \ H \left(\left(\log \frac{1}{R}\right)^{-1}\log \frac{1}{t} \right).$$
Fix $x_0 \in M$. For  $\epsilon \in (0,\frac{1}{2})$ and $R\in (0,1)$, define $u_R : M \rightarrow \R$ by
$$ u_R(x)\ = \ \psi_R(d_g(x_0,x)), \ x\in M.$$
Then $u_R \in C_c^k(M)$ with support in $B(x_0,1)$, and $u_R =1$ on $B(x_0,R)$.
Moreover, we have from the computations of \cite{Fonta} and \cite{A}
$$\int\limits_M|\nabla_g^k u_R|^pd\mu_g = \left(\omega_{n-1}\log \frac{1}{R}\right)^{1-p}\alpha(k,n)^p \left[ 1+ C\epsilon + O\left((\log \frac{1}{R})^{-1}\right)\right]$$
as $R\rightarrow 0$ where $C$ is independent of $\epsilon$ and $R$ and 
$$
\alpha(k,n) \ = \ \left\{ \begin{array}{ll}
\omega_{n-1} 2^{\frac{k-2}{2}}\Gamma (\frac{k}{2})\ (n-k)(n-k+2)\cdots (n-2) 
&{\rm if} \ \ k \ {\rm even}, \\
\omega_{n-1} 2^{\frac{k-1}{2}}\Gamma (\frac{k+1}{2})\ (n-k+1)(n-k+3)\cdots (n-2) &{\rm if} \ \ k \ {\rm odd}. 
\end{array}
\right.
$$ 
%Moreover, using the fact that $H(t) \le C t$, we can easily show that
%$$\int\limits_M|u_R|^p d\mu_g \le  C \left(\log \frac{1}{R}\right)^{-p}. $$
Now
$$S_q \le \frac{\int\limits_M|\nabla_g^ku_R|^p   \ d\mu_g}{\left[\int\limits_M |u_R|^{q}\ d\mu_g\right]^{\frac{p}{q}}}   \le \frac{\int\limits_M|\nabla_g^ku_R|^p  \ d\mu_g}{\left[\int\limits_{B(x_0,R)} |u_R|^{q}\ d\mu_g\right]^{\frac{p}{q}}} $$
$$\le \frac{\left(\omega_{n-1}\log \frac{1}{R}\right)^{1-p}\alpha(k,n)^p \left[ 1+ C\epsilon + O\left((\log \frac{1}{R})^{-1}\right)\right]}{\left(\frac{\omega_{n-1}R^n}{n}(1\ + O(R))\right)^{\frac{p}{q}}}$$
$$= \frac{1}{q^{p-1}}\left[(1+ C\epsilon)\left(\frac{p}{p-1} e \beta_0(k,n)\right)^{p-1}\ + \ o(1)\right]$$
if we set $ \log \frac{1}{R}= \frac{p-1}{pn}q$ as $q \rightarrow \infty$. Taking $\epsilon \rightarrow 0$, we get the required assertion and this
completes the proof.
\qed

\bibliographystyle{plain}

%\bibliography{BSand.bib}

\end{document}